\documentclass[12pt]{amsart}

\usepackage{amsmath,amssymb,amsthm}
\usepackage{mathtools}
\mathtoolsset{showonlyrefs=true}

\usepackage{fullpage}
\usepackage{url}
\usepackage{hyperref}
\usepackage{microtype}

\hypersetup{
  colorlinks=true,
  linkcolor=blue,
  citecolor=blue,
  urlcolor=blue
}

\usepackage{listings}
\usepackage{xcolor}

\lstset{
    language=Python,             
    basicstyle=\ttfamily\scriptsize, 
    keywordstyle=\color{blue},   
    commentstyle=\color{gray},   
    stringstyle=\color{red},     
    frame=single,                
    breaklines=true,             
    postbreak=\mbox{\textcolor{red}{$\hookrightarrow$}\space}, 
    showstringspaces=false,      
    numbers=left,                
    numberstyle=\tiny\color{gray}, 
    backgroundcolor=\color{white} 
}

\newtheorem{theorem}{Theorem}[section]
\newtheorem{lemma}[theorem]{Lemma}
\newtheorem{proposition}[theorem]{Proposition}

\theoremstyle{definition}
\newtheorem{definition}[theorem]{Definition}
\newtheorem{remark}[theorem]{Remark}

\numberwithin{equation}{section}

\newcommand{\F}{\mathbb{F}}
\newcommand{\GL}{\mathrm{GL}}
\newcommand{\SL}{\mathrm{SL}}
\newcommand{\PGL}{\mathrm{PGL}}
\newcommand{\Sym}{\mathrm{Sym}}

\newcommand{\tensor}{\otimes}
\newcommand{\diag}{\mathrm{diag}}

\def\DD{D\kern-.7em\raise0.4ex\hbox{\char '55}\kern.33em}


\title[Spectral Separation for Polynomial Tensor Representations]{Spectral Separation and Eigenvalue Labelling\\ for Polynomial Tensor Representations\\ of General Linear Groups}

\author{\DD\d {\u a}ng V\~o Ph\'uc}
\address{Department of Mathematics, FPT University, Quy Nhon AI Campus, An Phu Thinh New Urban Area, Vietnam}
\email{dangphuc150488@gmail.com}
\thanks{ORCID: \url{https://orcid.org/0000-0002-6885-3996}}

\keywords{Matrix group recognition, rewriting framework, polynomial tensor representation, Singer cycle, eigenvalue labelling, finite general linear group}
\subjclass[2020]{20G05, 20G40, 20C40}

\begin{document}

\begin{abstract}
Let $q=p^f$ be a prime power, let $H \leq \GL_d(q)$ be a subgroup containing a genuine Singer cycle $s$ of order $q^d-1$, and let $W$ be an $\F_qH$--module whose scalar extension is identified with the restriction of an untwisted polynomial tensor representation $\bigotimes_{t=1}^r L(\lambda^{(t)})$ of the algebraic group $\GL_d$. If the factors have polynomial degrees $k_t$ and total degree $K=\sum_t k_t<q-1$, we prove that distinct weights give distinct eigenvalues of $s$ on $W\otimes_{\F_q}\F_{q^d}$. The proof is based on an elementary base-$q$ injectivity lemma: bounded digit vectors determine distinct residues modulo $q^d-1$. When the tensor product is multiplicity-free for the diagonal torus, the Singer cycle therefore has simple spectrum over the splitting field $\F_{q^d}$.

We also record a shifted exponent formula for situations in which the Singer eigenvalue data are acted on by powers of the $q$--Frobenius map. This gives separation of distinct shifted digit vectors under the same bound $K<q-1$. We deliberately distinguish this $q$--Frobenius eigenvalue bookkeeping from the usual Steinberg $p$--Frobenius twists; the latter require separate module-specific analysis.

These results give a uniform spectral explanation for the eigenvalue-separation phenomenon in bounded-degree polynomial tensor representations and isolate the main spectral input needed for rewriting. Motivated by this, we formulate a conditional rewriting framework based on Singer-type elements, compatible base-$q$ eigenvalue labelling, and polynomial-functor combinatorics. The reconstruction of the natural action is reduced to a functor-specific inversion and normalization problem. Finally, the viability of this framework is demonstrated through computational experiments and explicit examples, including a full algebraic reconstruction of the natural action from a strictly multiplicity-free, genuine tensor product representation.
\end{abstract}

\maketitle

\section{Introduction}

\subsection{Constructive recognition of matrix groups}

Constructive recognition of finite groups is a central theme in computational group theory.
Given a finite group $G$ specified in some implicit form, for example as a subgroup of a permutation group or a matrix group, one aims to construct an explicit isomorphism between $G$ and a standard copy of a known abstract group $H$.
This problem has been intensively studied for symmetric and alternating groups, classical groups, and other families of groups of Lie type; see, for example, Beals--Leedham-Green--Niemeyer--Praeger--Seress for symmetric and alternating groups~\cite{BealsLGNPS2003}, and Brooksbank's work on classical groups in their natural representation~\cite{Brooksbank2003}.

In the setting of matrix groups, the natural representation plays a distinguished role.
For a classical group $H$ of dimension $d$ over $\F_q$, a great deal of structure is visible in its action on the natural module $V \cong \F_q^d$.
Explicit recognition algorithms for $H \leq \GL(V)$, in the natural representation, were developed by Brooksbank and others~\cite{Brooksbank2003}, and later extended to the black--box setting and more general classical groups by Dietrich--Leedham-Green--O'Brien~\cite{DietrichLGO2015}.
These algorithms typically assume that the given group acts on a space of dimension $d$ and that the representation is already natural (or close to natural).

In practice, however, matrix groups often arise via representations of dimension $n$ different from $d$, and a central task is to rewrite such a representation in terms of the natural one.
Formally, suppose $G \leq \GL(W)$ is a group generated by a set of matrices $X$ acting irreducibly on an $n$--dimensional $\F_q$--vector space $W$, and that $G$ is known to arise from a classical group $H$ whose natural module has dimension $d$.
The \emph{rewriting problem}, broadly construed, is to recover from the given action on $W$ a natural or projective-natural copy of the underlying degree-$d$ action.
In the present paper we work only at the projective level, and our goal is to construct a homomorphism
\[
  \varphi : G \longrightarrow \PGL_d(q),
\]
equivalent to the natural projective representation of the target subgroup on its natural module, in such a way that $\varphi(g)$ can be effectively computed from the matrix of $g$ on $W$.

\subsection{Previous work on rewriting algorithms}

The first general treatment of rewriting for small-dimensional representations is due to Magaard, O'Brien and Seress~\cite{MagaardOBrienSeress2008}.
They consider the case when $G \cong H$ with $\SL_d(q) \leq H \leq \GL_d(q)$ and $W$ is an irreducible $\F_qG$--module of dimension at most $d^2$.
They develop a Las Vegas polynomial-time algorithm in that setting, based on a detailed analysis of the possible modules of dimension at most $d^2$.

Subsequently, more specialised rewriting algorithms have been developed for particular classes of representations that occur frequently in computational practice.
Corr~\cite{Corr2015} studies the symmetric square representation and analyses a Las Vegas approach to rewriting the representation afforded by $\Sym^2(V)$ to a projective copy of the natural representation. In the preprint cited here, the algorithmic statement is formulated with an additional conditional ingredient; regardless of that algorithmic status, the paper isolates important structural features of the symmetric-square case and shows how such modules can be exploited when they occur as composition factors.

A further step was taken by G\"ul and Ankaral{\i}o\u{g}lu~\cite{GulAnk2016}.
They study the case where $W$ is in a tensor family of \emph{twisted modules} of degree between $d^2$ and $d^3$.
More precisely, they assume $W$ is a twisted tensor product of highest weight modules with highest weights among
\[
  \lambda_1, \lambda_2, \lambda_{d-2}, \lambda_{d-1}, 2\lambda_1, 2\lambda_{d-1}
\]
for a classical group $H$ of type $A$, and they develop a Las Vegas algorithm that rewrites the action on $W$ to a projective action of degree $d$.
Their analysis combines representation theory (via Steinberg's tensor product theorem) with careful, yet ad-hoc, eigenvalue computations for particular tensor products such as
\[
  V \tensor V^{\tau} \tensor V^{\tau^2}, \quad
  V \tensor (\wedge^2 V)^{\tau}, \quad
  V \tensor (\Sym^2 V)^{\tau},
\]
where $\tau$ denotes the relevant field/Frobenius automorphism in their setting.  Some of the highest weights in their list are most naturally interpreted from the special-linear or dual-module viewpoint; they are not all polynomial $\GL_d$--modules of degree at most two under the convention used below.

These contributions fit into the broader matrix group recognition project, which aims to build a general framework for the constructive recognition of finite matrix groups via composition trees and local handlers for particular types of composition factors and modules; see, for example, \cite{Brooksbank2003,DietrichLGO2015} and references therein.

\subsection{Limitations of existing work}

The rewriting algorithms in \cite{Corr2015,GulAnk2016,MagaardOBrienSeress2008} are precise and effective for their intended families of modules, but their scope is restricted in two ways.

First, the work of Magaard--O'Brien--Seress is constrained to representations of dimension at most $d^2$.
Beyond this range, the classification of possible irreducible modules becomes significantly more complex.
Their methods rely on a detailed understanding of the small-degree representation theory of finite groups of Lie type in defining characteristic and do not immediately extend to larger families of modules; see also L\"ubeck's tables and bounds for small-degree representations~\cite{Luebeck2001}.

Second, the twisted-module algorithm of G\"ul--Ankaral{\i}o\u{g}lu~\cite{GulAnk2016} focuses on a fixed list of highest weights of small natural or dual type.
Under the polynomial $\GL_d$ convention used in this paper, however, the dual-side weights such as $\lambda_{d-1}$ and $\lambda_{d-2}$ are not degree-one or degree-two polynomial modules; they correspond instead to exterior powers of the dual or to projectively normalised special-linear weights.
The proofs in~\cite{GulAnk2016} therefore involve module-specific and projective information in addition to case-by-case eigenvalue computations.
While this approach works well for the particular modules considered there, it does not directly generalise to arbitrary polynomial highest weights or more complicated tensor constructions.

On the other hand, the general black--box recognition algorithms for classical groups~\cite{Brooksbank2003,DietrichLGO2015} treat all representations uniformly, without exploiting representation-theoretic structure such as the polynomial degree of highest weights.
This leads to algorithms that are broadly applicable but may be suboptimal on specific families of modules.

\subsection{Contribution of this paper}

The aim of this paper is to isolate a structural condition under which a genuine Singer cycle has separated eigenvalues on bounded-degree polynomial tensor representations, and to explain how this spectral property can be used as the basis of a rewriting strategy.

We work with subgroups $H \leq \GL_d(q)$ that contain a Singer cycle $s \in H$ of order $q^d-1$, and we first treat untwisted polynomial tensor representations
\[
  U = \bigotimes_{t=1}^r L(\lambda^{(t)}),
\]
where each $L(\lambda^{(t)})$ is a polynomial highest-weight module of degree $k_t$.  If the total degree
\[
  K := \sum_{t=1}^r k_t
\]
satisfies $K<q-1$, then a simple base-$q$ injectivity lemma shows that distinct weights of $U$ give distinct eigenvalues of $s$.  If the tensor product is multiplicity-free for the diagonal torus, the Singer cycle has simple spectrum over $\F_{q^d}$.

We also record a shifted exponent formula for the following, more limited, purpose.  When a module-specific construction changes the Singer eigenvalue data by powers of the $q$--Frobenius map, the exponent vector is obtained by cyclically shifting the base-$q$ digit positions and collecting equal residues modulo $d$.  Under the same bound $K<q-1$, distinct shifted digit vectors are separated by the Singer eigenvalues.  This statement is an eigenvalue-bookkeeping result.  It should not be read as a replacement for Steinberg's tensor product theorem with $p$--Frobenius twists; if one wants to treat general Steinberg twists, the exponent shifts are governed by powers of $p$, and additional arguments are required.

Thus the key input is a number-theoretic observation: under the bound $K<q-1$, bounded base-$q$ digit data are uniquely determined modulo $q^d-1$.  In the untwisted case this yields a uniform replacement for the case-by-case eigenvalue calculations that arise in low-degree tensor examples.  In shifted settings it isolates the precise combinatorial datum that controls the eigenvalue calculation, namely the shifted digit vector.

If, in addition, the relevant weight spaces are multiplicity-free and the combinatorics of the module identify the eigenspaces with those weights or shifted digit vectors, then the eigenspaces of $s$ are one-dimensional.  Building on this, we describe an algorithmic framework for rewriting representations in this class.  The framework consists of:
\begin{itemize}
  \item finding a Singer-type element, or the image of a Singer cycle modulo a central kernel;
  \item labelling the resulting eigenlines by compatible base-$q$ digit vectors;
  \item using the combinatorics of polynomial functors to relate the action on $W$ to the unknown natural action on $V$;
  \item reducing the reconstruction of the natural action to an explicit inversion and normalization problem for the relevant Schur functors.
\end{itemize}

Accordingly, the main unconditional contribution of the paper is the spectral separation and eigenline-labelling mechanism. The reconstruction step is presented as a conditional reduction to a functor-specific inversion problem. We support this framework with illustrative computations, including a full algebraic reconstruction of the natural action from a genuine tensor product representation, rather than claiming a fully uniform rewriting theorem for arbitrary polynomial highest weights. Special-linear and purely projective variants, where one naturally works with Singer subgroups of order $(q^d-1)/(q-1)$ rather than genuine Singer cycles of order $q^d-1$, are left outside the scope of the present paper.

\textbf{The paper is organised as follows.}
In Section~\ref{sec:preliminaries} we collect notation and recall basic facts about polynomial representations of $\GL_d(q)$ and tensor products.
Section~\ref{sec:number-theory} contains the number-theoretic injectivity lemma which underlies our spectral analysis.
Section~\ref{sec:distinct-eigenvalues} establishes the distinct eigenvalue property and the simple spectrum property under multiplicity-freeness.
Section~\ref{sec:algorithm} describes the resulting algorithmic framework and formulates a conditional reconstruction statement.
Section~\ref{sec:sage} presents illustrative \textsf{SageMath} code for core subroutines.
Finally, Section~\ref{sec:computations} reports on computational experiments that verify the base-$q$ injectivity lemma, demonstrate the simple spectrum property for symmetric powers, and explicitly execute the rewriting framework on a strictly multiplicity-free, genuine tensor product of the natural module.

\section{Preliminaries}
\label{sec:preliminaries}

\subsection{Finite fields, the natural module, and Singer cycles}

Throughout, $p$ denotes a prime and $q = p^f$ a prime power, with $f \geq 1$.
We write $\F_q$ for the finite field of order $q$ and $\F_{q^d}$ for its extension of degree $d$.
The multiplicative group of $\F_{q^d}$ is cyclic of order $q^d-1$.

Let $d \geq 2$ and let $V$ be a $d$--dimensional vector space over $\F_q$.
We write $\GL_d(q)$ for the group of invertible linear transformations of $V$, and $\SL_d(q)$ for the subgroup of determinant~1 transformations.
We fix a basis of $V$ and identify $\GL_d(q)$ with the group of invertible $d \times d$ matrices over $\F_q$.
We also write $\PGL_d(q)$ for the projective general linear group $\GL_d(q)/Z(\GL_d(q))$.

\begin{definition}\label{def:singer}
A \emph{Singer cycle} in $\GL_d(q)$ is an element of order $q^d-1$.
Equivalently, after identifying $V$ with $\F_{q^d}$ as an $\F_q$--vector space, it is the linear transformation given by multiplication by a generator of $\F_{q^d}^{\times}$.
In particular, over $\F_{q^d}$ its eigenvalues form a single orbit
\[
  \omega,\ \omega^q,\ \omega^{q^2},\ \dots,\ \omega^{q^{d-1}},
\]
where $\omega$ is a generator of $\F_{q^d}^{\times}$.
\end{definition}

More generally, an element of $\GL_d(q)$ whose characteristic polynomial is irreducible of degree $d$ also has eigenvalues forming a single $q$--Frobenius orbit.
However, the arguments in this paper use a genuine Singer cycle, so that exponents are naturally taken modulo $q^d-1$.

Concretely, let $\omega$ be a generator of $\F_{q^d}^{\times}$.
Then there exists a basis of $V \tensor_{\F_q} \F_{q^d}$ with respect to which a Singer cycle $s$ acts diagonally as
\[
  s \cdot e_i = \ell_i e_i, \quad \ell_i = \omega^{q^{i-1}}, \qquad i = 1,\dots,d.
\]

Throughout the spectral sections of this paper, $H$ denotes a subgroup of $\GL_d(q)$ containing such a genuine Singer cycle.
We emphasize that we do not attempt here to treat the special-linear/projective analogue separately: in $\SL_d(q)$ one naturally encounters Singer subgroups of order $(q^d-1)/(q-1)$ rather than elements of order $q^d-1$, and this requires a different treatment of scalar factors.

\subsection{Polynomial representations and highest weights}

We recall basic facts about polynomial representations of $\GL_d$ over fields of positive characteristic.
Our main reference is Jantzen's monograph: see \cite[Part II]{Jantzen2003}.

Let $\Bbbk=\overline{\F}_q$, and view $\GL_d=\GL(V_{\Bbbk})$ as an algebraic group over $\Bbbk$, where $V_{\Bbbk}=V\tensor_{\F_q}\Bbbk$. Thus, in this subsection $\GL_d$ denotes the algebraic group over $\Bbbk$, while $\GL_d(q)$ from the previous subsection is its group of $\F_q$--rational points.
A rational representation of $\GL_d$ is called \emph{polynomial of degree $k$} if, with respect to some (equivalently, any) basis, the corresponding matrix coefficients are homogeneous polynomial functions of degree $k$ in the matrix entries.
Equivalently, degree-$k$ polynomial representations are the finite-dimensional modules for the Schur algebra $S(d,k)$; see, for example, \cite{Jantzen2003}.

Irreducible rational representations of $\GL_d$ are parametrised by dominant weights $\lambda = (\lambda_1,\dots,\lambda_d)$ with integers $\lambda_1 \geq \cdots \geq \lambda_d$.
When all $\lambda_i$ are non-negative, the corresponding module $L(\lambda)$ is polynomial and has degree $k(\lambda) := \lambda_1 + \cdots + \lambda_d.$ For the restriction to the algebraic subgroup $\SL_d \subset \GL_d$, we may regard $\lambda$ modulo multiples of $(1,1,\dots,1)$, but this plays no role in our arguments.

\begin{definition}
Let $\lambda$ be a dominant weight with non-negative entries.
The \emph{polynomial degree} of $L(\lambda)$ is
\[
  k(\lambda) = \sum_{i=1}^d \lambda_i.
\]
For a finite list $\lambda^{(1)},\dots,\lambda^{(r)}$ of such weights, we set
\[
  K := \sum_{t=1}^r k(\lambda^{(t)}).
\]
\end{definition}

We shall need a simple description of the weights of $L(\lambda)$ when $\lambda$ is polynomial.

Let $T$ denote the diagonal torus of $\GL_d$, consisting of elements of the form $\diag(t_1,\dots,t_d)$ with $t_i \in \Bbbk^\times$.
Let $\varepsilon_i : T \to K^\times$ be the character given by $\varepsilon_i(\diag(t_1,\dots,t_d)) = t_i$.
Every weight $\mu$ of a rational $\GL_d$--module can be expressed as
\[
  \mu = \sum_{i=1}^d b_i(\mu) \varepsilon_i
\]
with $b_i(\mu) \in \mathbb{Z}$.

\begin{proposition}
\label{prop:poly-weights}
Let $L(\lambda)$ be an irreducible polynomial $\GL_d$--module of degree $k = k(\lambda)$, and let $\mu$ be a weight of $L(\lambda)$ with respect to $T$.
Then
\[
  b_i(\mu) \in \mathbb{Z}_{\ge 0} \quad \text{for all } i, \qquad \sum_{i=1}^d b_i(\mu) = k.
\]
\end{proposition}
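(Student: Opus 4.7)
The plan is to reduce the statement to the transparent case of the tensor power $V_K^{\tensor k}$, whose weights can be read off by hand. I would first invoke the standard fact that every irreducible polynomial $\GL_d$--module of degree $k$ appears as a composition factor of $V_K^{\tensor k}$; this is one of the foundational results in the theory of polynomial representations (equivalently, of representations of the Schur algebra $S(d,k)$) and is recorded in \cite[Part II]{Jantzen2003}.

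Once that is in place, the weights of $V_K^{\tensor k}$ are computed directly from the basis of pure tensors $e_{i_1} \tensor \cdots \tensor e_{i_k}$, with $i_j \in \{1,\dots,d\}$. For $t = \diag(t_1,\dots,t_d) \in T$ the action is $t \cdot (e_{i_1} \tensor \cdots \tensor e_{i_k}) = t_{i_1} \cdots t_{i_k} \cdot (e_{i_1} \tensor \cdots \tensor e_{i_k})$, so every weight of $V_K^{\tensor k}$ has the form $\mu = \sum_{i=1}^d a_i \varepsilon_i$ with $a_i = \#\{j : i_j = i\} \in \mathbb{Z}_{\ge 0}$ and $\sum_i a_i = k$. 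This already gives the desired constraints on the coefficients $b_i$ for weights of the ambient tensor power.

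To conclude, I would use that the weights of any subquotient of a rational $\GL_d$--module form a subset of the weights of the ambient module, because the split torus $T$ acts semisimply and weight-space decompositions are inherited by submodules and quotients. Applying this to $L(\lambda)$ as a composition factor of $V_K^{\tensor k}$ yields $b_i(\mu) \in \mathbb{Z}_{\ge 0}$ and $\sum_i b_i(\mu) = k$ for every weight $\mu$ of $L(\lambda)$. I do not expect a genuine obstacle: the only non-bookkeeping step is the appearance of $L(\lambda)$ in $V_K^{\tensor k}$, which is standard. As a self-contained alternative, one may fix a weight basis of $L(\lambda)$ and note that the monomial $\mu(t) = t_1^{b_1(\mu)} \cdots t_d^{b_d(\mu)}$ is a matrix coefficient of a polynomial representation of degree $k$ restricted to the diagonal torus, hence must be a homogeneous polynomial of degree $k$ in $t_1,\dots,t_d$, which forces the same conclusion.
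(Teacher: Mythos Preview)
Your proposal is correct and follows essentially the same route as the paper: both reduce to the weights of pure tensors in $V_K^{\tensor k}$ and then pass to the irreducible module inside it. Your phrasing ``composition factor'' is in fact more accurate in positive characteristic than the paper's ``direct summand'', and your alternative argument via matrix coefficients of a homogeneous polynomial representation is a clean self-contained variant, but neither changes the underlying strategy.
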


\begin{proof}
This is standard; see, for example, \cite{Jantzen2003}.
Polynomial representations of $\GL_d$ of degree $k$ are controlled by the Schur algebra $S(d,k)$, and their weights are among the weights occurring in the tensor power $V_{\Bbbk}^{\tensor k}$.
Now a pure tensor
\[
e_{i_1} \tensor \cdots \tensor e_{i_k}
\]
has weight
\[
\varepsilon_{i_1} + \cdots + \varepsilon_{i_k}.
\]
Hence every weight of $V_{\Bbbk}^{\tensor k}$ has the form
\[
\sum_{i=1}^d b_i \varepsilon_i
\qquad\text{with}\qquad
b_i \in \mathbb{Z}_{\ge 0},\quad \sum_{i=1}^d b_i = k.
\]
Therefore the same holds for every weight of any degree-$k$ polynomial $\GL_d$--module, and in particular for $L(\lambda)$.
\end{proof}

In particular, we may regard each weight $\mu$ of $L(\lambda)$ as encoded by a vector $b(\mu) = (b_1(\mu),\dots,b_d(\mu))$ in
\[
  \mathcal{B}_k := \Bigl\{ (b_1,\dots,b_d) \in \mathbb{Z}_{\ge 0}^d \;\Bigm|\; \sum_{i=1}^d b_i = k \Bigr\}.
\]

\begin{definition}[Multiplicity-free polynomial module]
\label{def:multiplicity-free}
Let $L(\lambda)$ be a rational polynomial $\GL_d$--module.
We say that $L(\lambda)$ is \emph{multiplicity-free (for the diagonal torus $T$)} if every weight of $L(\lambda)$ with respect to $T$ occurs with multiplicity~$1$, i.e.\ every weight space is one-dimensional.
More generally, a finite tensor product $W = \bigotimes_{t=1}^r L(\lambda^{(t)})$ is called multiplicity-free if each of its weights (as a $T$--module) occurs with multiplicity~$1$.
\end{definition}

Important examples, at the level of weight-space multiplicities, include the symmetric powers $\Sym^k(V)$ and exterior powers $\wedge^k V$ of the natural module $V$; in these cases each weight is determined uniquely by the multiset of indices and hence occurs with multiplicity~$1$.  No irreducibility assertion for $\Sym^k(V)$ is intended in small characteristic; for example, when $k\ge p$ the symmetric power need not be the irreducible module $L(k\lambda_1)$.

\subsection{\texorpdfstring{$q$--Frobenius shifts}{q-Frobenius shifts} of Singer eigenvalue data}

We shall use one shifted exponent calculation later in the paper.  To avoid a possible ambiguity, we state explicitly what is meant here.  Let $s$ be a Singer cycle with eigenvalues
\[
  \ell_i=\omega^{q^{i-1}},\qquad i=1,\dots,d,
\]
on $V\tensor_{\F_q}\F_{q^d}$.  A $q$--Frobenius shift by $e$ sends this Singer eigenvalue data to
\[
  \ell_i^{q^e}=\omega^{q^{i-1+e}}.
\]
Thus, at the level of exponents modulo $q^d-1$, the shift cyclically permutes the base-$q$ digit positions modulo $d$.

This convention is an eigenvalue-bookkeeping convention.  It is not the same as the full Steinberg tensor product theorem for rational representations, where the relevant algebraic twists are usually $p$--Frobenius twists when $q=p^f$.  A $p^e$--Frobenius twist would produce exponent factors $p^e$, not generally cyclic shifts of base-$q$ digits.  The proposition below records only the $q$--shift calculation needed for Singer eigenvalue labelling.

For this subsection only, the notation $L(\lambda)^{[e]}$ means a copy of the weight data of $L(\lambda)$ whose Singer eigenvalues have been shifted by $q^e$ in the above sense.

\begin{proposition}[Shifted exponent formula for Singer eigenvalue data]
\label{prop:twisted-exponents}
Let $s$ be a Singer cycle in $H$, and write its eigenvalues on $V \tensor_{\F_q} \F_{q^d}$ as
\[
  \ell_i = \omega^{q^{i-1}}, \qquad i=1,\dots,d,
\]
where $\omega$ is a generator of $\F_{q^d}^{\times}$.
Let
\[
  W_{\mathrm{sh}} = \bigotimes_{t=1}^r L(\lambda^{(t)})^{[e_t]},
\]
where each $L(\lambda^{(t)})$ is an irreducible polynomial $\GL_d$--module of degree $k_t$.
For each $t$, let
\[
  \mu^{(t)} = \sum_{i=1}^d b_i^{(t)} \varepsilon_i
\]
be a weight of $L(\lambda^{(t)})$.
Then the corresponding shifted pure tensor has Singer eigenvalue
\[
  \omega^E,
  \qquad
  E \equiv \sum_{t=1}^r \sum_{i=1}^d b_i^{(t)} q^{\,i-1+e_t}
  \pmod{q^d-1}.
\]

For $j=1,\dots,d$, define
\[
  c_j
  =
  \sum_{\substack{1 \le t \le r,\ 1 \le i \le d\\ i-1+e_t \equiv j-1 \; (\mathrm{mod}\ d)}}
  b_i^{(t)}.
\]
Then
\[
  E \equiv \sum_{j=1}^d c_j q^{j-1} \pmod{q^d-1},
\]
and
\[
  0 \le c_j \le K := \sum_{t=1}^r k_t
  \qquad\text{for all } j,
  \qquad
  \sum_{j=1}^d c_j = K.
\]
In particular, if $K < q-1$, then Lemma~\ref{lem:injectivity} applies to the shifted digit vector
\[
  \mathbf{c} = (c_1,\dots,c_d).
\]
\end{proposition}

\begin{proof}
On the weight space of weight $\mu^{(t)}$ in $L(\lambda^{(t)})$, the element $s$ acts by
\[
  \prod_{i=1}^d \ell_i^{\,b_i^{(t)}}
  =
  \omega^{\sum_{i=1}^d b_i^{(t)} q^{i-1}}.
\]
Applying the $q$--Frobenius shift by $e_t$ raises this Singer eigenvalue to the $q^{e_t}$--th power.  Hence the shifted contribution of the $t$--th factor is
\[
  \omega^{\sum_{i=1}^d b_i^{(t)} q^{\,i-1+e_t}}.
\]
Multiplying over $t=1,\dots,r$ gives the first congruence for $E$.

Since $q^d\equiv 1\pmod{q^d-1}$, we may reduce the exponents $i-1+e_t$ modulo $d$ and collect the terms with the same residue class.  This yields
\[
  E \equiv \sum_{j=1}^d c_j q^{j-1} \pmod{q^d-1},
\]
with $c_j$ as above.
Finally, each $b_i^{(t)}$ is a non-negative integer and $\sum_i b_i^{(t)}=k_t$ for each $t$.  Hence each $c_j$ is non-negative, each satisfies $c_j\le \sum_t k_t=K$, and summing over $j$ gives $\sum_j c_j=K$.
\end{proof}

\begin{remark}
Proposition~\ref{prop:twisted-exponents} shows that the shifted Singer eigenvalue of a pure tensor is determined by a shifted digit vector
\[
  \mathbf{c}=(c_1,\dots,c_d)\in \mathcal{B}_K.
\]
Hence Lemma~\ref{lem:injectivity} separates distinct shifted digit vectors whenever $K<q-1$.
What is not automatic is that distinct weights, or distinct tensor factors in a module-specific construction, yield distinct shifted digit vectors.  Any application to genuinely twisted modules must therefore check this combinatorial separation separately.
\end{remark}

Steinberg's tensor product theorem describes irreducible rational representations in terms of $p$--Frobenius twists and $p$--restricted weights; see \cite[Part II, \S3]{Jantzen2003}.  We use it only as structural background and do not use it explicitly in the proofs.

\subsection{The rewriting problem for polynomial tensor modules}

Let $H\le \GL_d(q)$ be a subgroup containing a genuine Singer cycle, acting on its natural module $V$ over $\F_q$.  We are given a subgroup $G\le \GL(W)$, with $W$ an $n$--dimensional vector space over $\F_q$.  In algorithmic applications the representation on $W$ need not be faithful on the scalar centre of $H$.  For instance, $\Sym^2(V)$ kills $-I$ when $q$ is odd.  We therefore formulate the setup in a way that allows a central kernel.

We assume that there is a homomorphism
\[
  \rho:H\longrightarrow \GL(W)
\]
with image $G$ and kernel $Z_0\le Z(H)$.  The faithful case is the special case $Z_0=1$, where $G\cong H$.  We further assume that:
\begin{itemize}
  \item $G$ acts irreducibly on $W$;
  \item over $\Bbbk=\overline{\F}_q$, the $\Bbbk G$--module $W\tensor_{\F_q}\Bbbk$ is identified with the restriction to $H$ of an untwisted tensor product
  \[
    \bigotimes_{t=1}^r L(\lambda^{(t)}),
  \]
  where each $L(\lambda^{(t)})$ is an irreducible polynomial representation of $\GL_d$ of degree $k_t$, and $K:=\sum_{t=1}^r k_t<q-1$.
\end{itemize}

For clarity, the conditional reduction theorem in Section~\ref{sec:algorithm} is formulated in this untwisted setting.  Shifted Singer eigenvalue data are treated only at the level of Proposition~\ref{prop:twisted-exponents}; a reconstruction framework for arbitrary Steinberg-twisted tensor products would require additional module-specific input.

We make the standard algorithmic assumptions that:
\begin{itemize}
  \item we have access to the generating set $X$ of $G$ as matrices in $\GL_n(\F_q)$;
  \item we can multiply matrices and compute in $\F_q$ and in $\F_{q^d}$ when needed;
  \item we can sample nearly uniform random elements of $G$ at cost $\xi$ per element.
\end{itemize}

The \emph{rewriting problem} in this context is:

\medskip\noindent
\textbf{Problem.}
\emph{Construct a projective representation
\[
  \varphi:G\longrightarrow \PGL_d(q)
\]
that is equivalent to the natural projective representation of $H$ on $V$ after quotienting by the central kernel $Z_0$, and such that $\varphi(g)$ can be effectively computed from the matrix of $g$ on $W$.}

\medskip

In the present paper we treat this problem only when the image in $G$ of a genuine Singer cycle of $H$ is available and can be labelled compatibly with the natural Singer eigenvalues.  Because a central kernel may reduce the order of that image, we avoid requiring the matrix in $G$ itself to have order exactly $q^d-1$.  Special-linear and purely projective variants, where one naturally works with Singer subgroups of order $(q^d-1)/(q-1)$ rather than genuine Singer cycles of order $q^d-1$, require a separate treatment and are not pursued here.

\medskip

Our goal is to develop a rewriting framework based on spectral labelling, and to formulate a conditional reduction theorem under the additional hypothesis that $W$ is multiplicity-free (Definition~\ref{def:multiplicity-free}).

\section{A number-theoretic injectivity lemma}
\label{sec:number-theory}

The key technical ingredient in our analysis is the following elementary lemma about writing integers in base $q$.
It asserts that, under a suitable bound on the digits, the map from digit vectors to residues modulo $q^d-1$ is injective.

\begin{lemma}[Base-$q$ injectivity]
\label{lem:injectivity}
Let $q \ge 2$, $d \ge 1$ and let $C$ be an integer with $0 \le C < q-1$.
Consider the set
\[
  \mathcal{B}_C \;=\; \bigl\{ \mathbf{b} = (b_1,\dots,b_d) \in \mathbb{Z}_{\ge 0}^d \bigm| 0 \le b_i \le C \text{ for all } i \bigr\}
\]
and the map
\[
  \Phi : \mathcal{B}_C \longrightarrow \mathbb{Z}/(q^d-1)\mathbb{Z}, \qquad
  \Phi(\mathbf{b}) = \sum_{i=1}^d b_i q^{i-1} \bmod (q^d-1).
\]
Then $\Phi$ is injective.
\end{lemma}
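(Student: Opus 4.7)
The plan is to reduce the lemma to uniqueness of base-$q$ expansions by showing that each vector $\mathbf{b} \in \mathcal{B}_C$ has an integer lift $N(\mathbf{b}) := \sum_{i=1}^d b_i q^{i-1}$ which is strictly smaller than $q^d - 1$; the reduction modulo $q^d-1$ then loses no information, and the injectivity of $\Phi$ follows immediately from the usual uniqueness of digit representations.

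First, I would record the trivial lower bound $N(\mathbf{b}) \ge 0$ and compute the upper bound
\[
  N(\mathbf{b}) \;\le\; C \sum_{i=1}^{d} q^{i-1} \;=\; C \cdot \frac{q^d - 1}{q - 1}.
\]
Since by hypothesis $C \le q - 2$, this gives $N(\mathbf{b}) \le (q-2)(q^d-1)/(q-1) < q^d - 1$. Thus the integer $N(\mathbf{b})$ lies in $\{0, 1, \dots, q^d - 2\}$, so its residue class modulo $q^d - 1$, namely $\Phi(\mathbf{b})$, has a unique representative in that range, which is $N(\mathbf{b})$ itself. Consequently, two vectors $\mathbf{b}, \mathbf{b}' \in \mathcal{B}_C$ with $\Phi(\mathbf{b}) = \Phi(\mathbf{b}')$ must satisfy $N(\mathbf{b}) = N(\mathbf{b}')$ as integers.

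Second, I would invoke the uniqueness of base-$q$ expansions. Because $0 \le b_i \le C < q - 1 < q$ for every $i$, each entry $b_i$ is a legitimate base-$q$ digit, so $(b_1, b_2, \dots, b_d)$ is precisely the sequence of base-$q$ digits (least significant first) of the integer $N(\mathbf{b})$. Since an integer in $[0, q^d)$ has a unique such digit sequence, $N(\mathbf{b}) = N(\mathbf{b}')$ forces $b_i = b_i'$ for all $i$, proving injectivity.

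There is no real obstacle here; the only point requiring mild care is the strict inequality in the upper bound, which is exactly where the hypothesis $C < q - 1$ (rather than $C \le q - 1$) is used. This sharpness matters later in the paper, since it is this same numerical slack that prevents two distinct weight-digit vectors of $W$ from collapsing to the same eigenvalue of a Singer cycle on $W \otimes_{\F_q} \F_{q^d}$.
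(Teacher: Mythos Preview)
Your proposal is correct and follows essentially the same argument as the paper: both bound the integer lift $\sum_i b_i q^{i-1}$ by $C\,(q^d-1)/(q-1) < q^d-1$ to conclude that reduction modulo $q^d-1$ is lossless, and then appeal to uniqueness of base-$q$ digit expansions using $b_i \le C < q$. The only cosmetic difference is that the paper phrases the first step as showing the difference of two lifts lies in $(-(q^d-1),\,q^d-1)$, but this is equivalent to your formulation.
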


\begin{proof}
It is useful to distinguish the residue class from its standard integer representative.  For
$\mathbf{b}\in\mathcal{B}_C$ put
\[
  \widetilde\Phi(\mathbf{b})=\sum_{i=1}^d b_iq^{i-1}\in\mathbb{Z}.
\]
Then
\[
  0\le \widetilde\Phi(\mathbf{b})
  \le C\sum_{i=1}^d q^{i-1}
  = C\,\frac{q^d-1}{q-1}.
\]
Since $C<q-1$, we have
\[
  C\,\frac{q^d-1}{q-1}<q^d-1.
\]
Thus every $\widetilde\Phi(\mathbf{b})$ lies in the interval $[0,q^d-2]$.

Suppose $\Phi(\mathbf{b})=\Phi(\mathbf{c})$ in $\mathbb{Z}/(q^d-1)\mathbb{Z}$.  Then
\[
  \widetilde\Phi(\mathbf{b})-\widetilde\Phi(\mathbf{c})
\]
is divisible by $q^d-1$.  But both integer representatives lie in $[0,q^d-2]$, so their difference has absolute value strictly less than $q^d-1$.  Hence the difference is zero, and
\[
  \widetilde\Phi(\mathbf{b})=\widetilde\Phi(\mathbf{c})
\]
as integers.

The two sides are base-$q$ expansions
\[
  b_1+b_2q+\cdots+b_dq^{d-1}
  =c_1+c_2q+\cdots+c_dq^{d-1}.
\]
Since $0\le b_i,c_i\le C<q-1<q$, all digits lie in $\{0,\dots,q-1\}$.  Uniqueness of base-$q$ expansion gives $b_i=c_i$ for all $i$, and hence $\mathbf{b}=\mathbf{c}$.
\end{proof}

This lemma will be applied to the weight multiplicities of polynomial modules, which will play the role of the digits $b_i$.

\section{Distinct eigenvalues for tensor products}
\label{sec:distinct-eigenvalues}

We now combine Lemma~\ref{lem:injectivity} with the weight structure of polynomial modules to establish a general distinct-eigenvalue property for Singer cycles on tensor products, and a simple spectrum result under multiplicity-freeness.

\subsection{Eigenvalues of a Singer cycle on a polynomial module}

Let $s$ be a Singer cycle in $H$.
Over $\F_{q^d}$ we may choose a basis of $V \tensor_{\F_q} \F_{q^d}$ such that $s$ acts as
\[
  s \cdot e_i = \ell_i e_i, \qquad \ell_i = \omega^{q^{i-1}}, \quad i = 1,\dots,d,
\]
for some generator $\omega$ of $\F_{q^d}^{\times}$.

Let $L(\lambda)$ be an irreducible polynomial representation of $\GL_d$ of degree $k = k(\lambda)$, realised over $\Bbbk$.
Let $T$ be the diagonal torus as above, and let $\mu$ be a weight of $L(\lambda)$ with weight vector $b(\mu) = (b_1(\mu),\dots,b_d(\mu))$ as in Proposition~\ref{prop:poly-weights}.
Then for $t = \diag(t_1,\dots,t_d) \in T$ the action on a weight vector of weight $\mu$ is
\[
  t \cdot v = \biggl( \prod_{i=1}^d t_i^{\,b_i(\mu)} \biggr) v.
\]

In particular, if we regard $s$ as an element of $T$ over $\Bbbk$, then $s$ acts on this weight space with eigenvalue
\[
  \prod_{i=1}^d \ell_i^{\,b_i(\mu)} = \prod_{i=1}^d \omega^{b_i(\mu) q^{i-1}}
  = \omega^{E(\mu)},
\]
where
\begin{equation}\label{eq:E-mu}
  E(\mu) = \sum_{i=1}^d b_i(\mu) q^{i-1} \in \mathbb{Z}/(q^d-1)\mathbb{Z}.
\end{equation}
By Proposition~\ref{prop:poly-weights} we have $b_i(\mu) \ge 0$ and $\sum_i b_i(\mu) = k$, so $b_i(\mu) \in [0,k]$.

\subsection{Eigenvalues on tensor products}

Let $\lambda^{(1)},\dots,\lambda^{(r)}$ be dominant weights with non-negative entries, and suppose $L(\lambda^{(t)})$ is polynomial of degree $k_t = k(\lambda^{(t)})$.
Consider the tensor product
\[
  W = \bigotimes_{t=1}^r L(\lambda^{(t)}).
\]
As a module for the diagonal torus $T$, the weights of $W$ are sums
\[
  \nu = \mu^{(1)} + \cdots + \mu^{(r)},
\]
where $\mu^{(t)}$ is a weight of $L(\lambda^{(t)})$.
Let $b^{(t)}(\mu^{(t)}) = (b_1^{(t)},\dots,b_d^{(t)})$ denote the corresponding weight vector, so that
\[
  \mu^{(t)} = \sum_{i=1}^d b_i^{(t)} \varepsilon_i.
\]

\begin{definition}
For a weight $\nu$ of $W$ as above, define
\[
  c_i(\nu) = \sum_{t=1}^r b_i^{(t)}, \qquad
  \mathbf{c}(\nu) = (c_1(\nu),\dots,c_d(\nu)).
\]
\end{definition}

Then $c_i(\nu) \ge 0$ and
\[
  \sum_{i=1}^d c_i(\nu) = \sum_{t=1}^r \sum_{i=1}^d b_i^{(t)} = \sum_{t=1}^r k_t = K.
\]
Thus $\mathbf{c}(\nu) \in \mathcal{B}_K$, where $K$ is the total polynomial degree.

The eigenvalue of $s$ on the weight space of $\mu^{(t)}$ in $L(\lambda^{(t)})$ is $\omega^{E(\mu^{(t)})}$ with
\[
  E(\mu^{(t)}) = \sum_{i=1}^d b_i^{(t)} q^{i-1}.
\]
Therefore, the eigenvalue of $s$ on a pure tensor
\[
  v_1 \tensor \cdots \tensor v_r \in W
\]
with $v_t$ in the weight space of $\mu^{(t)}$ is
\[
  \omega^{E(\mu^{(1)})} \cdots \omega^{E(\mu^{(r)})}
  = \omega^{E(\nu)}, \qquad
  E(\nu) = \sum_{t=1}^r E(\mu^{(t)}) = \sum_{i=1}^d c_i(\nu) q^{i-1}.
\]
Hence the eigenvalue of $s$ on the weight space of $\nu$ in $W$ is $\omega^{E(\nu)}$, where $E(\nu)$ depends only on $\mathbf{c}(\nu)$.

\begin{remark}
The untwisted tensor product case is the setting in which the weight $\nu$ itself determines the digit vector $\mathbf{c}(\nu)$ and hence the eigenvalue of a Singer cycle.
For shifted Singer eigenvalue data, Proposition~\ref{prop:twisted-exponents} shows that the relevant invariant is instead the shifted digit vector.
Accordingly, the theorem below is stated only for untwisted tensor products; applications to genuinely twisted modules require a separate check that the relevant shifted digit vectors are distinct.
\end{remark}

\subsection{Distinct eigenvalues for distinct weights}

We can now state and prove the first main spectral result in a form compatible with the base-field module $W$ and its scalar extension to $\overline{\F}_q$.

\begin{theorem}[Distinct eigenvalues for different weights]
\label{thm:distinct-eigenvalues-weak}
Let $H \le \GL_d(q)$ be a subgroup containing a Singer cycle $s$, and let $W$ be an $\F_qH$--module.
Assume that, over
\[
  \Bbbk=\overline{\F}_q,
\]
there is an isomorphism of $\Bbbk H$--modules
\[
  W_{\Bbbk} := W \tensor_{\F_q} \Bbbk
  \;\cong\;
  \bigotimes_{t=1}^r L(\lambda^{(t)}),
\]
where each $L(\lambda^{(t)})$ is an irreducible polynomial $\GL_d$--module of degree $k_t$, and let
\[
  K := \sum_{t=1}^r k_t.
\]
Assume $K < q-1$.

Then for any two distinct weights
\[
  \nu \neq \nu'
\]
of the tensor product
\[
  \bigotimes_{t=1}^r L(\lambda^{(t)})
\]
(viewed as characters of the diagonal torus $T$), the eigenvalues of $s$ on the corresponding transported weight spaces of $W_{\Bbbk}$ are distinct.
\end{theorem}

\begin{proof}
Set
\[
  \Bbbk=\overline{\F}_q
  \qquad\text{and}\qquad
  W_{\Bbbk}:=W \tensor_{\F_q} \Bbbk.
\]
By hypothesis there is a fixed $\Bbbk H$--module isomorphism
\[
  W_{\Bbbk} \;\cong\; \bigotimes_{t=1}^r L(\lambda^{(t)}).
\]
We use this isomorphism to transport the weight-space decomposition of the polynomial tensor product to $W_{\Bbbk}$.  After diagonalising the natural action of the Singer cycle over $\F_{q^d}\subseteq\Bbbk$, the image of $s$ lies in a conjugate of the diagonal torus, and the relevant weights are precisely the weights of the tensor product
\[
  \bigotimes_{t=1}^r L(\lambda^{(t)}).
\]

Let $\nu$ be such a weight.
Write
\[
  \nu = \mu^{(1)} + \cdots + \mu^{(r)},
\]
where $\mu^{(t)}$ is a weight of $L(\lambda^{(t)})$, and write
\[
  \mu^{(t)}=\sum_{i=1}^d b_i^{(t)}\varepsilon_i.
\]
Define
\[
  c_i(\nu):=\sum_{t=1}^r b_i^{(t)},
  \qquad
  \mathbf{c}(\nu):=(c_1(\nu),\dots,c_d(\nu)).
\]
Then
\[
  c_i(\nu)\ge 0
  \qquad\text{and}\qquad
  \sum_{i=1}^d c_i(\nu)=\sum_{t=1}^r k_t = K,
\]
so in particular
\[
  0\le c_i(\nu)\le K
  \qquad\text{for all }i.
\]

By the eigenvalue formula of Section~\ref{sec:distinct-eigenvalues}, the element $s$ acts on the weight space of $\nu$ by the scalar
\[
  \omega^{E(\nu)},
  \qquad
  E(\nu)=\sum_{i=1}^d c_i(\nu)q^{i-1}
  \in \mathbb{Z}/(q^d-1)\mathbb{Z},
\]
where $\omega$ is a generator of $\F_{q^d}^{\times}$.

Now let $\nu\neq \nu'$ be two distinct weights.
Since weights are characters of $T$, their coefficient vectors differ, so
\[
  \mathbf{c}(\nu)\neq \mathbf{c}(\nu').
\]
Because each coordinate of these vectors lies in $[0,K]$ and $K<q-1$, Lemma~\ref{lem:injectivity} shows that the map
\[
  \mathbf{c}\longmapsto \sum_{i=1}^d c_i q^{i-1}\pmod{q^d-1}
\]
is injective on this range.
Hence
\[
  E(\nu)\neq E(\nu')
  \qquad\text{in}\qquad
  \mathbb{Z}/(q^d-1)\mathbb{Z}.
\]
Therefore
\[
  \omega^{E(\nu)}\neq \omega^{E(\nu')},
\]
so the eigenvalues of $s$ on the corresponding weight spaces are distinct.
\end{proof}

\begin{remark}
In all applications in this paper, we are interested in modules of positive total polynomial degree $K>0$. The hypothesis $K<q-1$ then forces $q\ge 3$. Indeed, if $q=2$ then $K<q-1$ implies $K<1$ and hence $K=0$, so there are no non-trivial polynomial tensor products satisfying our standing assumption. Thus Theorems~\ref{thm:distinct-eigenvalues-weak} and~\ref{thm:simple-spectrum} are non-vacuous only for $q \ge 3$.

Moreover, Theorem~\ref{thm:distinct-eigenvalues-weak} is a statement about distinct weights of the tensor product
\[
W_{\Bbbk}\cong \bigotimes_{t=1}^r L(\lambda^{(t)})
\]
in the untwisted setting; it does not address the dimensions of the corresponding weight spaces, which may be larger than one in general. For shifted Singer eigenvalue data, Proposition~\ref{prop:twisted-exponents} shows that the natural invariant controlling the eigenvalue is the shifted digit vector rather than the original weight itself.
\end{remark}

\subsection{Simple spectrum under multiplicity-freeness}

We now state the strengthened result under the additional assumption that the corresponding tensor product over $\overline{\F}_q$ is multiplicity-free.

\begin{theorem}[Simple spectrum under multiplicity-freeness]
\label{thm:simple-spectrum}
Let $H \le \GL_d(q)$ be a subgroup containing a Singer cycle $s$, and let $W$ be an $\F_qH$--module.
Assume that, over
\[
  \Bbbk=\overline{\F}_q,
\]
there is an isomorphism of $\Bbbk H$--modules
\[
  W_{\Bbbk} := W \tensor_{\F_q} \Bbbk
  \;\cong\;
  \bigotimes_{t=1}^r L(\lambda^{(t)}),
\]
where each $L(\lambda^{(t)})$ is an irreducible polynomial $\GL_d$--module of degree $k_t$, and the total polynomial degree
\[
  K := \sum_{t=1}^r k_t
\]
satisfies $K<q-1$.

Assume moreover that this tensor product is multiplicity-free for the diagonal torus $T$ in the sense of Definition~\ref{def:multiplicity-free}.
Then every eigenspace of $s$ on
\[
  W \tensor_{\F_q} \F_{q^d}
\]
is one-dimensional.
Equivalently, $s$ has simple spectrum after scalar extension to the splitting field $\F_{q^d}$.
\end{theorem}

\begin{proof}
Set
\[
  \Bbbk=\overline{\F}_q
  \qquad\text{and}\qquad
  W_{\Bbbk}:=W \tensor_{\F_q} \Bbbk.
\]
By hypothesis there is a fixed $\Bbbk H$--module isomorphism
\[
  W_{\Bbbk} \;\cong\; \bigotimes_{t=1}^r L(\lambda^{(t)}).
\]
We transport the weight-space decomposition of this polynomial tensor product to $W_{\Bbbk}$.  After diagonalising the natural action of $s$ over $\F_{q^d}\subseteq\Bbbk$, this gives a decomposition
\[
  W_{\Bbbk} \;=\; \bigoplus_{\nu} (W_{\Bbbk})_\nu,
\]
where the summands are the transported weight spaces.

Each weight space $(W_{\Bbbk})_\nu$ is stable under $s$, and $s$ acts on $(W_{\Bbbk})_\nu$ by the scalar $\nu(s)$.
By multiplicity-freeness, each weight space $(W_{\Bbbk})_\nu$ is one-dimensional.
Moreover, since the hypotheses of Theorem~\ref{thm:distinct-eigenvalues-weak} apply to the tensor product
\[
  \bigotimes_{t=1}^r L(\lambda^{(t)}),
\]
distinct weights $\nu \neq \nu'$ give distinct eigenvalues
\[
  \nu(s) \neq \nu'(s).
\]
Therefore each eigenspace of $s$ on $W_{\Bbbk}$ is exactly one weight space $(W_{\Bbbk})_\nu$, and is thus one-dimensional.

By the eigenvalue formula of Section~\ref{sec:distinct-eigenvalues}, every eigenvalue of $s$ on $W_{\Bbbk}$ is of the form $\omega^E$ for some generator $\omega \in \F_{q^d}^{\times}$.
Hence all eigenvalues of $s$ lie in $\F_{q^d}$.

Now set
\[
  W_{q^d}:=W \tensor_{\F_q} \F_{q^d}.
\]
For any eigenvalue $\lambda \in \F_{q^d}$ of $s$, the $\lambda$--eigenspace on $W_{q^d}$ is
\[
  \ker_{W_{q^d}}(s-\lambda I).
\]
Since scalar extension from $\F_{q^d}$ to $\Bbbk$ is exact, we have
\[
  \ker_{W_{q^d}}(s-\lambda I) \tensor_{\F_{q^d}} \Bbbk
  \;\cong\;
  \ker_{W_{\Bbbk}}(s-\lambda I).
\]
The right-hand side is the $\lambda$--eigenspace of $s$ on $W_{\Bbbk}$, which has already been shown to be one-dimensional.
Therefore
\[
  \dim_{\F_{q^d}} \ker_{W_{q^d}}(s-\lambda I)=1.
\]

Hence every eigenspace of $s$ on
\[
  W \tensor_{\F_q} \F_{q^d}
\]
is one-dimensional.
Equivalently, $s$ has simple spectrum over the splitting field $\F_{q^d}$.
\end{proof}

\begin{remark}[Relation with the modules of G\"ul--Ankaral{\i}o\u{g}lu]
The modules considered by G\"ul and Ankaral{\i}o\u{g}lu in~\cite{GulAnk2016} are twisted tensor products of modules with highest weights among
\[
  \lambda_1, \lambda_2, \lambda_{d-2}, \lambda_{d-1}, 2\lambda_1, 2\lambda_{d-1}.
\]
Under the polynomial $\GL_d$ convention used in this paper, the weights $\lambda_1$, $\lambda_2$ and $2\lambda_1$ correspond to $V$, $\wedge^2V$ and $\Sym^2(V)$, of polynomial degrees $1$, $2$ and $2$, respectively.  By contrast, $\lambda_{d-1}$ and $\lambda_{d-2}$ are dual-side weights from the $\SL_d$ viewpoint.  Their polynomial $\GL_d$ realisations as exterior powers of $V$ have degrees $d-1$ and $d-2$, and $2\lambda_{d-1}$ is not a degree-two polynomial $\GL_d$ module under this convention.  Thus the present polynomial $\GL_d$ framework does not directly cover all of the dual or projectively normalised cases treated in~\cite{GulAnk2016}.

Theorem~\ref{thm:simple-spectrum} also does not by itself recover all of the cases treated in~\cite{GulAnk2016}, because multiplicity-freeness of individual factors does not automatically imply multiplicity-freeness of the full tensor product, and because shifted or Steinberg-twisted factors require additional combinatorial separation checks.  What the present argument provides is a uniform explanation for the eigenvalue-separation mechanism once the relevant polynomial or shifted digit data are known to be separated.  In this sense, our results complement the case-by-case calculations of~\cite{GulAnk2016}, rather than replacing their module-specific analysis.
\end{remark}

\section{An algorithmic framework for rewriting}
\label{sec:algorithm}

In this section we explain how the spectral results of Section~\ref{sec:distinct-eigenvalues} lead to a rewriting framework for representations in the class covered by Theorem~\ref{thm:simple-spectrum}.  The key point is that a Singer-type element with simple spectrum gives canonically labelled \emph{eigenlines}.  Eigenvectors themselves are still determined only up to non-zero scalar multiples, and those scalar choices form part of the functor-specific normalization problem.

The discussion below should therefore be read as a conditional reduction rather than as a complete uniform rewriting algorithm for arbitrary polynomial highest weights.

\subsection{Outline of the framework}

Let $G\le \GL(W)$ be as in the problem statement of Section~\ref{sec:preliminaries}.  Thus $G$ is the image of a group $H\le \GL_d(q)$ containing a genuine Singer cycle, possibly modulo a central kernel, and
\[
  W\tensor_{\F_q}\overline{\F}_q
\]
is identified with an untwisted polynomial tensor product which is multiplicity-free for the diagonal torus and has total degree  $K<q-1$.

\medskip\noindent
\textbf{Step~1: Obtain a Singer-type element.}

The starting point is an element $s_G\in G$ which is the image of a genuine Singer cycle $\widetilde{s}\in H$ under the representation $\rho:H\to G$.  Because $\rho$ may have a central kernel, the order of $s_G$ can be a proper divisor of $q^d-1$.  Thus, in this framework, one should search for or certify the image of a Singer cycle rather than insist that the observed matrix on $W$ itself has order exactly $q^d-1$.

In the families treated by earlier rewriting algorithms, such elements can often be obtained by random search, using standard nearly uniform random element generators together with order tests, primitive prime divisors, and irreducible degree-$d$ factors; see, for example, \cite{GulAnk2016,MagaardOBrienSeress2008}.  For the purposes of the present reduction, this search step is treated as an external Las Vegas subroutine.

Once such an element is available, Theorem~\ref{thm:simple-spectrum} gives simple spectrum for its action on
\[
  W\tensor_{\F_q}\F_{q^d}
\]
under the stated multiplicity-free hypotheses.

\medskip\noindent
\textbf{Step~2: Label eigenlines via compatible base-$q$ data.}

Let $\lambda_1,\dots,\lambda_n$ be the eigenvalues of $s_G$ on $W\tensor_{\F_q}\F_{q^d}$.  In the theoretical model, these eigenvalues have the form
\[
  \lambda_j=\omega^{E_j},
\]
where $\omega$ is a Singer eigenvalue of $\widetilde{s}$ on the natural module $V$.

A crucial normalization issue is that the input representation on $W$ does not by itself determine this primitive Singer eigenvalue $\omega$.  If one chooses an arbitrary generator $\omega'=\omega^m$ of $\F_{q^d}^{\times}$, then the discrete logarithms are multiplied by $m^{-1}$ modulo $q^d-1$, and the resulting base-$q$ digits need not lie in the expected bounded set.  Therefore a compatible choice of $\omega$, or an equivalent Singer-labelling oracle, is an explicit hypothesis of the reconstruction theorem below.

With such a compatible normalization fixed, write
\[
  E_j=c_1^{(j)}+c_2^{(j)}q+\cdots+c_d^{(j)}q^{d-1},
\]
where $0\le E_j<q^d-1$.  By Proposition~\ref{prop:poly-weights}, Theorem~\ref{thm:distinct-eigenvalues-weak}, and Lemma~\ref{lem:injectivity}, the digits satisfy
\[
  0\le c_i^{(j)}\le K,
  \qquad
  \sum_{i=1}^d c_i^{(j)}=K,
\]
and the eigenline corresponding to $\lambda_j$ receives the unique label
\[
  \mathbf{c}^{(j)}=(c_1^{(j)},\dots,c_d^{(j)})\in\mathcal{B}_K.
\]
For shifted Singer eigenvalue data, Proposition~\ref{prop:twisted-exponents} gives the analogous labelling by shifted digit vectors, provided that the shifted combinatorial data are known to be separated.

\medskip\noindent
\textbf{Step~3: Choose and normalize eigenvectors.}

For each eigenvalue $\lambda_j$, compute the one-dimensional eigenspace
\[
  L_j=\ker(s_G-\lambda_jI)\subseteq W\tensor_{\F_q}\F_{q^d}.
\]
The spectral theorem supplies a labelled set of eigenlines
\[
  L_j\longleftrightarrow \mathbf{c}^{(j)}.
\]
Choosing a vector $f_j\in L_j$ produces an eigenbasis, but this basis is not canonical: each $f_j$ may be multiplied by an arbitrary element of $\F_{q^d}^{\times}$.  In concrete rewriting algorithms these scalars must be fixed by a functor-specific normalization, or else absorbed into the basis-identification step.

\medskip\noindent
\textbf{Step~4: Reduce to a functor-specific basis-identification and inversion problem.}

Let $g\in G$, and let $M_{\mathrm{eig}}(g)$ denote the matrix of $g$ with respect to a chosen labelled eigenbasis.  This basis need not be the standard functorial basis in which the action of $g$ is expressed by known polynomial functions in the entries of an unknown natural matrix $A(g)$ on $V$.

Thus an intermediate problem remains.

\medskip\noindent
\emph{Basis-identification and scaling problem.}
\emph{Use the eigenline labels $\mathbf{c}^{(j)}$, the scalar choices in the eigenbasis, and the combinatorics of the relevant polynomial functors to identify the labelled eigendata with a weight basis, tableau basis, or other functorial basis in which the induced action is explicitly known.}

\medskip

Once this identification is made, the matrix of $g$ in the resulting functorial basis, denoted $M_W(g)$, is obtained from $M_{\mathrm{eig}}(g)$ by a known change of basis.  The entries of $M_W(g)$ are then polynomial expressions in the entries of the unknown matrix $A(g)$ on the natural module.

For example, if $W=V\tensor V$, then
\[
  M_W(g)=A(g)\tensor A(g).
\]
For symmetric powers, exterior powers, and Schur functors, the analogous induced matrices are obtained from explicit polynomial formulas in the entries of $A(g)$.

This leads to the reconstruction problem.

\medskip\noindent
\emph{Functor-specific inversion problem.}
\emph{Given $M_W(g)$ and the basis-identification data, recover a matrix $A(g)$ on $V$, up to the scalar and field-automorphism ambiguities inherent in the functor, such that the prescribed polynomial functor applied to $A(g)$ yields $M_W(g)$.}

\medskip

For specific functors, such as symmetric powers, exterior powers, and the low-degree modules treated in the literature, this inversion can often be carried out explicitly by selecting entries corresponding to simple monomials and solving the resulting equations.  In the general setting of arbitrary polynomial highest weights, however, this is a module-specific problem.

\medskip\noindent
\textbf{Step~5: Assemble and descend the projective representation.}

Assume that the functor-specific reconstruction can be performed consistently for the generators $x\in X$.  For each generator one obtains a projective class
\[
  [A(x)]\in \PGL_d(\F_{q^d}).
\]
Additional descent and normalization checks are then needed to ensure that these classes lie in a conjugate of $\PGL_d(q)$ and that they define a homomorphism
\[
  \varphi:G\longrightarrow \PGL_d(q).
\]
This descent is included as an explicit hypothesis in the theorem below.

\subsection{A conditional reduction theorem}

We now formulate the preceding framework as a precise conditional reduction.  The spectral labelling mechanism is supplied by the results of Sections~\ref{sec:number-theory} and~\ref{sec:distinct-eigenvalues}.  The Singer search, compatible choice of $\omega$, basis scaling, functor inversion, and descent to $\PGL_d(q)$ are external inputs.

Throughout this section, we treat arithmetic in $\F_{q^d}$, the generation of nearly uniform random elements of $G$, and discrete logarithm computations in $\F_{q^d}^{\times}$ as basic subroutines.  All complexity bounds are measured in terms of the dimension $n=\dim W$, the parameters $d$, $\log q$, and $K$, the cost~$\xi$ of generating random elements of $G$, the cost of field operations in $\F_{q^d}$, and the cost~$\delta_{q^d}$ of discrete logarithm computations when such logarithms are used.

\begin{theorem}[Conditional reduction to functor-specific reconstruction]
\label{thm:rewriting}
Let $q=p^f$ be a prime power, and let $H\le \GL_d(q)$ be a subgroup containing a genuine Singer cycle.  Let $V$ be the natural $d$--dimensional $\F_qH$--module.  Suppose that $G\le \GL(W)$ is the image of a homomorphism
\[
  \rho:H\longrightarrow \GL(W)
\]
with central kernel $Z_0\le Z(H)$, and that $G$ acts irreducibly on the $n$--dimensional $\F_q$--vector space $W$.

Assume that, over $\overline{\F}_q$, the module $W\tensor_{\F_q}\overline{\F}_q$ is identified with the restriction to $H$ of an untwisted tensor product
\[
  \bigotimes_{t=1}^r L(\lambda^{(t)}),
\]
where each $L(\lambda^{(t)})$ is an irreducible polynomial representation of $\GL_d$ of degree $k_t$, the total degree
\[
  K:=\sum_{t=1}^r k_t
\]
satisfies $K<q-1$, and the tensor product is multiplicity-free for the diagonal torus.  Assume moreover that the polynomial functors defining the factors are known explicitly.

Suppose that an element $s_G\in G$ is given which is the image under $\rho$ of a genuine Singer cycle $\widetilde{s}\in H$, and that a compatible primitive Singer eigenvalue $\omega\in\F_{q^d}^{\times}$ for $\widetilde{s}$ is given, or equivalently that a Singer-labelling oracle is available.  Then the spectral results of Sections~\ref{sec:number-theory} and~\ref{sec:distinct-eigenvalues} yield a deterministic procedure which:
\begin{itemize}
  \item computes the eigenvalues and one-dimensional eigenspaces of $s_G$ on $W\tensor_{\F_q}\F_{q^d}$;
  \item labels the resulting eigenlines by vectors in $\mathcal{B}_K$ via compatible base-$q$ expansions.
\end{itemize}

Assume in addition that:
\begin{itemize}
  \item an image of a genuine Singer cycle can be found by a Las Vegas random search in expected polynomial time;
  \item the compatible Singer eigenvalue normalization, or an equivalent labelling oracle, is available;
  \item for the class of polynomial functors under consideration, the basis-identification, eigenbasis-scaling, and inversion problems of Step~4 can be solved uniformly in polynomial time from the labelled matrices attached to the generators $x\in X$;
  \item the resulting projective matrices descend from $\PGL_d(\F_{q^d})$ to $\PGL_d(q)$ and are compatible on the generators, in the sense that they define a homomorphism
  \[
    \varphi:G\longrightarrow \PGL_d(q).
  \]
\end{itemize}
Then, for any prescribed $\varepsilon\in(0,1)$, this gives a Las Vegas reduction which, with probability at least $1-\varepsilon$, constructs a projective representation
\[
  \varphi:G\longrightarrow \PGL_d(q)
\]
equivalent to the natural projective representation of $H$ on $V$ modulo the central kernel $Z_0$.

The expected running time is polynomial in
\[
  n,\ d,\ \log q,\ K,\ \log(\varepsilon^{-1}),
\]
and in the costs of random element generation, field arithmetic in $\F_{q^d}$, compatible labelling or discrete logarithm computations when used, and the functor-specific basis-identification, scaling, inversion, and descent subroutines.
\end{theorem}

\begin{proof}
Let $s_G=\rho(\widetilde{s})$, where $\widetilde{s}\in H$ is a genuine Singer cycle.  By Theorems~\ref{thm:distinct-eigenvalues-weak} and~\ref{thm:simple-spectrum}, the eigenspaces of $s_G$ on
\[
  W\tensor_{\F_q}\F_{q^d}
\]
are one-dimensional under the stated polynomial-degree and multiplicity-free hypotheses.  The compatible Singer eigenvalue $\omega$ identifies each eigenvalue with an exponent modulo $q^d-1$.  Lemma~\ref{lem:injectivity} then implies that each such exponent determines a unique bounded base-$q$ digit vector in $\mathcal{B}_K$.  This proves the deterministic eigenline-labelling part.

The remaining assertions follow by composition of the assumed subroutines.  The Las Vegas search supplies a suitable Singer-type element with the prescribed success probability.  The basis-identification and scaling routine converts the labelled eigenline data into the functorial coordinates needed for the chosen polynomial functors.  The inversion routine recovers projective natural matrices for the generators.  Finally, the compatibility and descent assumptions ensure that these projective matrices define a homomorphism
\[
  \varphi:G\longrightarrow \PGL_d(q)
\]
equivalent to the natural projective representation of $H$ on $V$ after quotienting by the central kernel.

The stated running time is obtained by adding the expected running times of the Singer search, the deterministic spectral labelling, and the assumed polynomial-time functor-specific routines.  Repetition of the Las Vegas search gives success probability at least $1-\varepsilon$ with the usual logarithmic dependence on $\varepsilon^{-1}$.
\end{proof}

\begin{remark}[Comparison with existing algorithms and limitations]
\label{rem:comparison}
From the perspective of matrix group recognition, the spectral results of this paper provide a module-specific labelling mechanism that can feed into rewriting procedures for suitable classes of polynomial tensor modules.

At the level of eigenvalue analysis, our results replace certain case-by-case calculations by a uniform bounded-digit argument based on Lemma~\ref{lem:injectivity}.  For shifted Singer eigenvalue data, Proposition~\ref{prop:twisted-exponents} gives the corresponding shifted digit calculation.  This isolates a clean spectral mechanism that applies across bounded-degree polynomial tensor constructions subject to the condition $K<q-1$.

What remains functor-specific is the basis-identification, eigenbasis-scaling, inversion, and descent step that reconstructs the natural action from the induced polynomial action.  Accordingly, Theorem~\ref{thm:rewriting} is a reduction statement rather than a complete uniform rewriting theorem for arbitrary polynomial highest weights.

This should be contrasted with the work of Corr~\cite{Corr2015} and of Magaard--O'Brien--Seress~\cite{MagaardOBrienSeress2008}, where the relevant reconstruction steps are carried out explicitly for the classes under consideration.  Likewise, the constructive recognition algorithms of Brooksbank~\cite{Brooksbank2003} and of Dietrich--Leedham-Green--O'Brien~\cite{DietrichLGO2015} operate at a different stage of the recognition pipeline: once a natural or projective-natural copy has been obtained, those algorithms can be used to complete the recognition process.

The hypotheses in Theorem~\ref{thm:rewriting} are deliberately explicit.  The bound $K<q-1$ excludes certain small fields; multiplicity-freeness excludes modules with repeated weight spaces; central kernels can reduce the order of Singer images; an arbitrary generator of $\F_{q^d}^{\times}$ does not provide a valid digit labelling unless it is compatible with the natural Singer eigenvalue; and special-linear or purely projective Singer subgroups require a separate treatment.
\end{remark}

\section{Illustrative \textsf{SageMath} code}
\label{sec:sage}

In this section we present some \textsf{SageMath} code fragments that illustrate core components of the algorithm:
computing eigenvalues of a Singer cycle, extracting base-$q$ expansions, and checking the injectivity property of Lemma~\ref{lem:injectivity} for a given module.

\subsection{\texorpdfstring{Base-$q$ expansion}{Base-q expansion} and injectivity test}

The following function takes an exponent $E$ and returns its base-$q$ expansion in $d$ digits.
We assume $0 \le E < q^d$.

\begin{lstlisting}
def base_q_expansion(E, q, d):
    """
    Return the base-q expansion of integer E as a list of length d:
        E = sum_{i=0}^{d-1} c[i]*q**i,  0 <= c[i] < q.
    """
    coeffs = []
    for _ in range(d):
        coeffs.append(E % q)
        E //= q
    return coeffs  # c[0], ..., c[d-1]
\end{lstlisting}

We can use this to verify the injectivity of the map $\Phi$ for given parameters $(q,d,C)$:

\begin{lstlisting}
def check_injectivity(q, d, C, verbose=True):
    r"""
    Check injectivity of the map

        Phi : B_C -> Z/(q^d - 1)Z,
        Phi(b_1,...,b_d) = sum_{i=0}^{d-1} b_{i+1} q^i  mod (q^d - 1),

    where
        B_C = { (b_1,...,b_d) in Z_{\ge 0}^d | 0 <= b_i <= C }.

    This is an exhaustive search, so exponential in d.
    Intended only for small d and C.
    """
    from itertools import product

    modulus = q**d - 1
    seen = {}

    for b in product(range(C + 1), repeat=d):
        E = sum(b[i] * (q**i) for i in range(d)) % modulus
        if E in seen and seen[E] != b:
            if verbose:
                print("Collision found!")
                print("  b =", b, "and", "c =", seen[E], "both map to", E)
            return False
        seen[E] = b

    if verbose:
        print(f"Phi is injective on B_{C} for (q,d,C) = ({q},{d},{C}).")
        print(f"Checked {len(seen)} vectors.")
    return True
\end{lstlisting}

For small values of $d$ and $C$ one can experimentally confirm Lemma~\ref{lem:injectivity}.

\medskip
\noindent
\textbf{Note.} The implementation above uses \texttt{itertools.product} which is suitable only for small parameters. For the large-scale experiments in Section~\ref{sec:experiments-model} below (e.g., $q=2^{16}, d=10$), our supplementary script employs an optimized recursive generator \texttt{weight\_patterns\_sumK} and performs arithmetic purely on integer exponents modulo $q^d-1$, avoiding the expensive construction of the extension field.

\subsection{Eigenvalues of a Singer cycle on a tensor power}

As a simple model, we consider the action on $V^{\tensor K}$.
In practice, $W$ is a submodule or quotient of $V^{\tensor K}$ corresponding to a Schur functor, but $V^{\tensor K}$ is easier to work with.

\begin{lstlisting}
def singer_eigenvalues_tensor_power(q, d, K):
    """
    Compute eigenvalues of a Singer cycle on V^{\otimes K},
    where V is the natural d-dimensional module over GF(q).

    Returns a dictionary mapping base-q exponent vectors c in B_K
    (with sum(c) = K) to the corresponding eigenvalue in GF(q^d).

    Note: This function verifies the injectivity of the map
        c -> omega^{E(c)} with E(c) = sum c_i q^i,
    for distinct weight patterns c in V^{\otimes K}.
    It does NOT detect possible weight multiplicities in irreducible
    submodules or quotients: in such modules several linearly
    independent vectors may share the same weight pattern c and
    hence the same eigenvalue.
    """
    # Finite fields
    Fq = GF(q)
    Fqd = GF(q**d, 'a')
    a = Fqd.gen()

    # Choose a generator omega of F_{q^d}^*
    omega = Fqd.multiplicative_generator()

    # Dictionary from c-vector to eigenvalue
    eig = {}

    # Enumerate all c in B_K
    from itertools import product

    for c in product(range(K+1), repeat=d):
        if sum(c) != K:
            continue
        # exponent E(c) = sum c_i q^i
        E = sum(c[i] * (q**i) for i in range(d))
        eig_val = omega**E
        eig[c] = eig_val

    return eig
\end{lstlisting}

\medskip
\noindent
\textbf{Implementation note.}
In an actual implementation one should not assume that the default field generator is primitive.  Accordingly, when a generator of $\F_{q^d}^{\times}$ is required, the code should use \texttt{Fqd.multiplicative\_generator()}.  If an explicit Singer matrix is constructed from a chosen primitive element $\omega$, its matrix entries must be the coordinates of multiplication by $\omega$ with respect to the chosen basis $1,\omega,\dots,\omega^{d-1}$.  One should not use \texttt{y.polynomial()} for this purpose unless the field generator used by Sage is known to be the same element as $\omega$.

By inspecting the dictionary returned by \texttt{singer\_eigenvalues\_tensor\_power}, one can verify that different $\mathbf{c}$ give distinct eigenvalues when $K < q-1$, in agreement with Lemma~\ref{lem:injectivity}.

\subsection{Recovering \texorpdfstring{base-$q$}{base-q} labels from eigenvalues}

Suppose we know an eigenvalue $\lambda\in\F_{q^d}$ and we have fixed a generator $\omega$ of $\F_{q^d}^{\times}$.  For the labels to have the interpretation used in Section~\ref{sec:algorithm}, this generator must be compatible with the Singer eigenvalue on the natural module, or else supplied by a labelling oracle.  With that normalization in place, we can compute the exponent $E$ such that $\lambda=\omega^E$ and then extract its base-$q$ expansion.  The following function performs this task.

\begin{lstlisting}
def exponent_and_digits(lam, omega, q, d):
    """
    Given an eigenvalue lam = omega^E in GF(q^d)^*,
    return the exponent E (0 <= E < q^d-1) and its base-q digits.

    The generator omega must be compatible with the Singer eigenvalue
    on the natural module.  An arbitrary primitive generator can give
    digit vectors with no representation-theoretic meaning.
    """
    # Discrete log: find E such that omega^E = lam
    E = lam.log(omega)           # Sage's discrete log
    E = ZZ(E) % (q**d - 1)       # normalise exponent modulo q^d - 1
    digits = base_q_expansion(E, q, d)
    return E, digits
\end{lstlisting}

This function provides the labels $\mathbf{c}^{(j)}$ used in Step~2 of the algorithm only after the compatible normalization of $\omega$ has been fixed.

Besides these basic routines, the revised supplementary script \texttt{singer\_sym\_check.sage} contains helper functions for constructing an explicit Singer matrix in $\GL_d(q)$, computing the induced action on $\Sym^k(V)$, running end-to-end eigenvalue checks, and performing the toy reconstruction experiment from $\Sym^2(A)$ described in Section~\ref{sec:computations} below.  The implementation should follow the coordinate convention for Singer matrices stated above and the symmetric-square formula in Section~\ref{sec:computations} exactly; in particular, the middle coefficient in the second column of $\Sym^2(A)$ is $ad+bc$, not $2(ad+bc)$.  For readability, we omit these longer code fragments here.

\section{Computational experiments}
\label{sec:computations}

In this section, we present some small computational experiments which serve as a sanity check for the number-theoretic Lemma~\ref{lem:injectivity} and for the spectral behaviour of Singer cycles on symmetric powers of the natural module, as well as a toy model for the reconstruction step in the rewriting algorithm.
The computations were performed in \textsf{SageMath}.  The supplementary script accompanying the revised manuscript is intended as a collection of sanity checks for the key mechanisms in the paper and should be kept synchronized with the formulas and coordinate conventions stated here.  The matrix reconstruction routines are designed only for small examples that illustrate correctness; the eigenvalue labelling and injectivity checks are implemented by integer exponent arithmetic and can also be tested on moderately large parameter values.

\subsection{Verification of the \texorpdfstring{base-$q$}{base-q} injectivity lemma}

Recall that Lemma~\ref{lem:injectivity} asserts that, for $C < q-1$, the map
\[
  \Phi : \mathcal{B}_C \to \mathbb{Z}/(q^d-1)\mathbb{Z}, \qquad
  \Phi(\mathbf{b}) = \sum_{i=1}^d b_i q^{i-1} \bmod (q^d-1),
\]
is injective, where
\[
  \mathcal{B}_C = \{ (b_1,\dots,b_d) \in \mathbb{Z}_{\geq 0}^d \mid 0 \leq b_i \leq C \text{ for all } i\}.
\]

For fixed parameters $(q,d,C)$, the script exhaustively enumerates $\mathbf{b} \in \mathcal{B}_C$, computes $\Phi(\mathbf{b})$ and checks for collisions.
Since this is exponential in $d$, we only run it for small values.
For example, with
\[
  q = 7,\quad d = 3,\quad C = 3,
\]
we have $C < q-1$ and $|\mathcal{B}_3| = 4^3 = 64$.
The program confirms that
\[
  \Phi : \mathcal{B}_3 \longrightarrow \mathbb{Z}/(7^3-1)\mathbb{Z}
\]
is injective on all $64$ vectors and reports no collisions.
In particular, a typical run outputs
\[
  \texttt{Phi is injective on B\_3 for (q,d,C) = (7,3,3).}
\]
followed by
\[
  \texttt{Checked 64 vectors.}
\]
Similar calculations for various small triples $(q,d,C)$ with $C < q-1$ consistently support Lemma~\ref{lem:injectivity}.
Of course, these experiments do not constitute a proof, but they provide additional confidence that the base-$q$ injectivity phenomenon behaves as predicted in all small cases.

\subsection{Model eigenvalues on tensor powers}
\label{sec:experiments-model}

Before turning to genuine matrix representations, we also implemented the abstract eigenvalue model described in Section~\ref{sec:distinct-eigenvalues}.
Given parameters $(q,d,K)$ with $K < q-1$, we consider
\[
  \mathcal{B}_K' = \{ \mathbf{c} = (c_1,\dots,c_d) \in \mathbb{Z}_{\geq 0}^d \mid \sum_i c_i = K \},
\]
and define, for each $\mathbf{c} \in \mathcal{B}_K'$,
\[
  E(\mathbf{c}) = \sum_{i=1}^d c_i q^{i-1}, \qquad \lambda_{\mathbf{c}} = \omega^{E(\mathbf{c})} \in \F_{q^d}^{\times},
\]
where $\omega$ is a fixed generator of $\F_{q^d}^{\times}$.
This matches the expression for the eigenvalues of a Singer cycle on $V^{\otimes K}$ (and on polynomial submodules) given in~\eqref{eq:E-mu}.

For $(q,d,K) = (7,3,3)$ the set $\mathcal{B}_3'$ has size $|\mathcal{B}_3'| = \binom{3+3-1}{3-1} = 10$, corresponding to the $10$ ways of writing $3$ as an ordered sum of $3$ non-negative integers.
The script computes $\lambda_{\mathbf{c}}$ for all $\mathbf{c} \in \mathcal{B}_3'$ and verifies that these $10$ eigenvalues are pairwise distinct.
This is reported in the console as
\begin{verbatim}
Number of weight patterns c with sum(c) = 3: 10
\end{verbatim}
followed by
\begin{verbatim}
Distinct weight patterns c give distinct eigenvalues (as expected).
\end{verbatim}

Moreover, for a sample pattern, say $\mathbf{c} = (0,0,3)$, the program obtains an eigenvalue
\[
  \lambda_{\mathbf{c}} = \omega^{147}, \qquad
  147 = 0 + 0 \cdot 7 + 3 \cdot 7^2,
\]
and recovers the base-$7$ digits of $147$ as $(0,0,3)$.
In the actual output this appears as
\begin{verbatim}
Example weight pattern c = (0, 0, 3),
Exponent E = 147,    Base-q digits = [0, 0, 3].
\end{verbatim}
This exactly illustrates the mechanism of Lemma~\ref{lem:injectivity}: the bound $K < q-1$ ensures that the digits $c_i$ can be reconstructed uniquely from the exponent $E(\mathbf{c})$ modulo $q^d-1$.

\medskip

As a more demanding sanity check closer to the intended applications, we also ran a purely ``exponent--model'' variant of this experiment for larger parameters. Specifically, we considered
\[
  (q,d,K) \;=\; (2^{16},\,10,\,K)
  \qquad\text{with}\qquad K \in \{2,3,4\}.
\]
In this setting we do not construct $\F_{q^d}$ or a Singer matrix explicitly, but work only with the exponents $E(\mathbf{c})$ in $\mathbb{Z}/(q^d-1)\mathbb{Z}$.
For each $K$ we enumerate all digit vectors $\mathbf{c} \in \mathcal{B}_K$ of length $d$ with $\sum_i c_i = K$ and verify that the map
\[
  \mathbf{c} \;\longmapsto\; \sum_{i=1}^d c_i q^{i-1} \bmod (q^d - 1)
\]
is injective.
For $K = 2,3,4$ we have
\[
  |\mathcal{B}_K'| \;=\; \binom{d+K-1}{K} \;=\; 55,\ 220,\ 715,\ \ \mbox{respectively},
\]
and in each case the program reports that all $|\mathcal{B}_K'|$ exponents are distinct and that no collisions occur.
This confirms the base-$q$ injectivity lemma in a regime where the corresponding symmetric-power dimension reaches
\[
  \dim \Sym^4(V) \;=\; \binom{10+4-1}{4} \;=\; 715.
\]
Here $\Sym^4(V)$ is used only as a polynomial-module dimension benchmark; since the characteristic is $2$, no irreducibility assertion is being made.  The computation avoids any explicit matrix arithmetic over the large field $\F_{2^{160}}$.

Finally, we demonstrate the feasibility of our labeling strategy for these parameters.
In our implementation, the lookup table for $K=4$ is generated and the corresponding injectivity check modulo $2^{160}-1$ is completed essentially instantaneously on standard hardware.
The precise runtime depends on the machine and the arithmetic backend, so we record this only as an indicative benchmark.
The main point is that, in this exponent model, working directly with exponents and lookup tables is much cheaper than explicit matrix or field computations; this does not by itself constitute a large-field rewriting experiment.

\subsection{Real Singer matrices and symmetric powers}

To test the full representation-theoretic picture, we next worked with genuine matrices in $\GL_d(q)$ and their induced action on symmetric powers of the natural module.

Fix $(q,d) = (7,3)$ and let $\F_{7^3}$ be the field of order $7^3$.
Let $\omega$ be a generator of $\F_{7^3}^{\times}$ and consider $\F_{7^3}$ as a $3$--dimensional vector space over $\F_7$ with basis $\{1,\omega,\omega^2\}$.
Multiplication by $\omega$ defines a linear operator $m_\omega$ on $\F_{7^3}$.  Computing the coordinates of $m_\omega(1)$, $m_\omega(\omega)$ and $m_\omega(\omega^2)$ with respect to this same basis gives a matrix $S\in\GL_3(7)$ which is a Singer cycle.
Equivalently, one may construct $S$ as the companion matrix of a primitive polynomial of degree $3$ over $\F_7$, or verify directly that the resulting matrix has order $7^3-1$.
For the example used in the script, one obtains
\[
  S = \begin{pmatrix}
    0 & 0 & 3 \\
    1 & 0 & 0 \\
    0 & 1 & 1
  \end{pmatrix},
\]
and the computation confirms that this matrix has order $342 = 7^3-1$.
Hence $S$ is indeed a Singer cycle in $\GL_3(7)$; this matrix also appears explicitly in the console output.

We then construct the induced matrices of $S$ on $\Sym^k(V)$, where $V \cong \F_7^3$ is the natural module and $k = 2,3$.
This is done concretely via the tensor power $V^{\tensor k}$ and the symmetrisation operator: we work with the basis of $V^{\tensor k}$ indexed by $k$--tuples of $\{0,1,2\}$, apply the $k$--fold tensor product $S^{\tensor k}$, and restrict to the symmetric subspace spanned by symmetrised basis vectors.
The resulting matrices $S^{(k)} \in \GL(\Sym^k(V))$ have dimensions
\[
  \dim \Sym^2(V) = \binom{3+2-1}{2} = 6, \qquad
  \dim \Sym^3(V) = \binom{3+3-1}{3} = 10,
\]
as expected, and the script prints these dimensions together with an explicit list of the multi-indices labelling the basis of $\Sym^k(V)$.

Over $\F_{7^3}$, we compute the eigenvalues of $S^{(k)}$ and compare them with the theoretical eigenvalues
\[
  \lambda_{\mathbf{c}} = \omega^{\sum_i c_i 7^{i-1}}, \qquad \mathbf{c} = (c_1,c_2,c_3) \in \mathbb{Z}_{\ge 0}^3,\ \sum_i c_i = k.
\]
The experiments yield the following:

\begin{itemize}
  \item For $k = 2$: there are exactly $6$ distinct eigenvalues of $S^{(2)}$ on $\Sym^2(V)$, and they coincide with the $6$ values $\lambda_{\mathbf{c}}$ arising from the $6$ patterns $\mathbf{c}$ with $\sum c_i = 2$.
  \item For $k = 3$: there are exactly $10$ distinct eigenvalues of $S^{(3)}$ on $\Sym^3(V)$, and they coincide with the $10$ values $\lambda_{\mathbf{c}}$ arising from the $10$ patterns $\mathbf{c}$ with $\sum c_i = 3$.
\end{itemize}

In both cases the script also checks the multiplicities of eigenvalues.
For $k = 2$ it reports
\[
  \texttt{Number of eigenvalues returned (with multiplicity): 6}
\]
followed by
\[
  \texttt{All eigenvalues have multiplicity 1 (simple spectrum).}
\]
Similarly, for $k=3$ it reports
\[
  \texttt{Number of eigenvalues returned (with multiplicity): 10}
\]
and again
\[
  \texttt{All eigenvalues have multiplicity 1 (simple spectrum).}
\]
Thus, in these examples $S^{(2)}$ and $S^{(3)}$ both have simple spectrum on $\Sym^2(V)$ and $\Sym^3(V)$, respectively.
This is consistent with the fact that these modules are multiplicity-free for the diagonal torus, and provides concrete examples illustrating Theorem~\ref{thm:simple-spectrum} in the simplest non-trivial cases.

Finally, the script compares the sets of eigenvalues obtained from the actual matrices $S^{(k)}$ with the model set $\{\lambda_{\mathbf{c}} : \mathbf{c} \in \mathcal{B}_k',\, \sum_i c_i = k\}$.
In both cases it prints
\[
  \texttt{Size of set of eigenvalues (real)   :} \quad \dim \Sym^k(V),
\]
\[
  \texttt{Size of set of eigenvalues (model)  :} \quad |\mathcal{B}_k'|,
\]
followed by
\[
  \texttt{SUCCESS: Eigenvalues on Sym\^k(V) match the digit-vector model.}
\]
Since $|\mathcal{B}_k'| = \dim \Sym^k(V)$, this confirms that the distinct eigenvalues are in bijection with the weight patterns $\mathbf{c}$, exactly as predicted by Theorem~\ref{thm:distinct-eigenvalues-weak}.

As a typical example in the case $k=3$, the script reports an eigenvalue $\lambda = \omega^{147}$ for $S^{(3)}$.
Taking discrete logarithms and expanding $147$ in base~$7$ yields digits $(0,0,3)$.
This demonstrates that the eigenvalue indeed corresponds to the weight pattern $\mathbf{c} = (0,0,3)$, in perfect agreement with the formula in Section~\ref{sec:distinct-eigenvalues}.

\subsection{A toy reconstruction experiment for the rewriting step}

The conditional framework of Section~\ref{sec:algorithm} relies, in Step~4, on recovering the underlying projective matrix $A(g)$ of an element $g$ on the natural module $V$ from its action on a polynomial module such as a symmetric or exterior power.
For a specific functor, this can often be done by identifying suitable matrix entries of $M_W(g)$ which are given by low-degree monomials in the entries of $A(g)$ and solving the resulting equations.
To illustrate this mechanism in a particularly simple setting, we implemented a toy model for the symmetric square in dimension $d=2$.

Let $V = \F_q^2$ with $q$ odd, and let $e_0, e_1$ be the standard basis of $V$. We identify $\Sym^2(V)$ with the $3$--dimensional space spanned by
\[
  v_1 = e_0 \tensor e_0, \quad
  v_2 = e_0 \tensor e_1 + e_1 \tensor e_0, \quad
  v_3 = e_1 \tensor e_1.
\]
For a matrix
\[
  A = \begin{pmatrix} a & b \\ c & d \end{pmatrix} \in \GL_2(q),
\]
we let $A$ act on $V^{\tensor 2}$ diagonally via
\[
  A \cdot (u \tensor w) = (Au) \tensor (Aw).
\]
In particular,
\[
  A e_0 = a e_0 + c e_1, \qquad A e_1 = b e_0 + d e_1.
\]
We briefly spell out the computation for $v_1$; the other cases are analogous. Using linearity of the tensor product,
\[
  A \cdot v_1
  \;=\; A \cdot (e_0 \tensor e_0)
  \;=\; (A e_0) \tensor (A e_0)
  \;=\; (a e_0 + c e_1) \tensor (a e_0 + c e_1),
\]
so
\[
  (a e_0 + c e_1) \tensor (a e_0 + c e_1)
  = a^2 (e_0 \tensor e_0)
    + a c (e_0 \tensor e_1 + e_1 \tensor e_0)
    + c^2 (e_1 \tensor e_1)
  = a^2 v_1 + a c\, v_2 + c^2 v_3.
\]
A similar expansion for $v_2$ and $v_3$ yields
\[
  A \cdot v_3 = b^2 v_1 + b d\, v_2 + d^2 v_3,
\]
\[
  A \cdot v_2 = 2 a b\, v_1 + (a d + b c)\, v_2 + 2 c d\, v_3.
\]
Thus, with respect to the basis $\{v_1,v_2,v_3\}$, the matrix $\Sym^2(A)$ has columns
\[
  \Sym^2(A) =
  \begin{pmatrix}
    a^2 & 2ab & b^2 \\
    ac  & ad+bc & bd \\
    c^2 & 2cd & d^2
  \end{pmatrix}.
\]

The script contains a function that implements this formula and, given a matrix $M \in \GL_3(q)$ of this form, performs a brute-force search over all $A \in \GL_2(q)$ to find those satisfying $\Sym^2(A) = M$.
Since the field is finite and we restrict ourselves to $q=7$, this is perfectly feasible for experimental purposes.
The aim is to verify that, generically, one can recover $A$ from $\Sym^2(A)$ up to the expected projective ambiguity.
For exact equality of matrices $\Sym^2(A')=\Sym^2(A)$, the scalar ambiguity is restricted by the kernel of the symmetric-square functor on scalars; in the projective setting this is precisely the natural ambiguity relevant for rewriting.

Concretely, for $q = 7$ the script runs a small number of random trials.
For each trial it chooses a random $A \in \GL_2(7)$, computes $M = \Sym^2(A)$, and then searches for all $A' \in \GL_2(7)$ with $\Sym^2(A') = M$.
Among the candidates it checks whether there is an $A'$ which is a scalar multiple of $A$.
In all trials this is the case.
For example, in one run the script reports
\[
  A =
  \begin{pmatrix}
    6 & 2 \\
    2 & 4
  \end{pmatrix}, \qquad
  A' =
  \begin{pmatrix}
    1 & 5 \\
    5 & 3
  \end{pmatrix},
\]
with the property that $\Sym^2(A') = \Sym^2(A)$.
A quick check in $\F_7$ shows that $A' = \lambda A$ with $\lambda = 6$, since
\[
  6 \cdot A' =
  6 \begin{pmatrix} 1 & 5 \\[1pt] 5 & 3 \end{pmatrix}
  = \begin{pmatrix} 6 & 30 \\ 30 & 18 \end{pmatrix}
  \equiv \begin{pmatrix} 6 & 2 \\ 2 & 4 \end{pmatrix} \pmod 7.
\]
In other trials the script similarly finds that $A$ is determined up to the expected projective ambiguity by the matrix $\Sym^2(A)$.

This toy example thus provides an explicit, concrete illustration of the functor-specific inversion step assumed in Theorem~\ref{thm:rewriting}.  In that theorem, $W$ may be a more complicated tensor product of polynomial modules, and the extraction of $A(g)$ from $M_W(g)$ uses the known Schur functor structure of each factor $L(\lambda^{(t)})$, rather than brute force.  The small-scale experiment for $\Sym^2(V)$ in dimension $2$ confirms the underlying principle that the entries of $M_W(g)$ are polynomial expressions in the entries of $A(g)$ and can, in favourable functor-specific situations, determine the natural projective action.

\subsection{A genuine tensor-product rewriting example over \texorpdfstring{$\F_9$}{F9}}
\label{sec:genuine-tensor-example}

We now give a hand-computed example in which the representation is a genuine tensor product and the full tensor product is multiplicity-free for the diagonal torus.  This example is included to illustrate explicitly the functor-specific inversion step in a case which is not a symmetric square.

Let
\[
  \F_9=\F_3(\alpha),\qquad \alpha^2=2.
\]
Let \(V=\F_9^3\) with basis \(e_1,e_2,e_3\), and consider the polynomial \(\GL_3\)-module
\[
  W=L(1,0,0)\tensor L(3,0,0).
\]
Since the characteristic is \(3\), the second factor \(L(3,0,0)\) is the first Frobenius twist of the natural module.  Thus, for \(A\in \GL_3(9)\), the induced action on \(W\) is
\[
  \rho(A)=A\tensor A^{(3)},
\]
where \(A^{(3)}\) denotes the matrix obtained by cubing each entry of \(A\).  The polynomial degrees of the two factors are \(1\) and \(3\), so
\[
  K=1+3=4<8=q-1.
\]

The weights of \(W\) are
\[
  \varepsilon_i+3\varepsilon_j,\qquad 1\le i,j\le 3.
\]
These weights are pairwise distinct.  Indeed, if
\[
  \varepsilon_i+3\varepsilon_j=\varepsilon_{i'}+3\varepsilon_{j'},
\]
then equality of the coefficients of the basis characters \(\varepsilon_1,\varepsilon_2,\varepsilon_3\) forces \(i=i'\) and \(j=j'\).  Hence \(W\) is multiplicity-free for the diagonal torus.  With respect to the tensor basis
\[
  w_{ij}=e_i\tensor e_j^{(1)},\qquad 1\le i,j\le 3,
\]
where \(e_j^{(1)}\) denotes the corresponding basis vector in the Frobenius-twisted factor, each weight space is the one-dimensional space \(\F_9 w_{ij}\).

We next choose an explicit Singer cycle.  Let
\[
  m(X)=X^3+(1+\alpha)X^2+(1+\alpha)\in \F_9[X].
\]
This polynomial is primitive of degree \(3\) over \(\F_9\).  If \(\omega\) is the image of \(X\) in \(\F_9[X]/(m(X))\), then \(\omega\) generates \(\F_{9^3}^{\times}\), whose order is
\[
  9^3-1=728.
\]
With respect to the basis \(1,\omega,\omega^2\) of \(\F_{9^3}\) over \(\F_9\), multiplication by \(\omega\) is represented by
\[
  S=
  \begin{pmatrix}
    0&0&2+2\alpha\\
    1&0&0\\
    0&1&2+2\alpha
  \end{pmatrix}.
\]
Thus \(S\) is a genuine Singer cycle in \(\GL_3(9)\).

Over \(\F_{9^3}\), choose a Singer eigenbasis \(u_1,u_2,u_3\) for the natural module, with
\[
  S u_i=\ell_i u_i,\qquad
  \ell_i=\omega^{9^{i-1}},\qquad i=1,2,3.
\]
On the Frobenius-twisted factor, the eigenvalue on \(u_j^{(1)}\) is
\[
  \ell_j^3=\omega^{3\cdot 9^{j-1}}.
\]
Therefore the eigenvalue of \(\rho(S)=S\tensor S^{(3)}\) on
\[
  u_i\tensor u_j^{(1)}
\]
is
\[
  \ell_i\ell_j^3
  =
  \omega^{E_{ij}},
  \qquad
  E_{ij}=9^{i-1}+3\cdot 9^{j-1}.
\]
Equivalently, \(E_{ij}\) has base-\(9\) digit vector \(\mathbf c_{ij}\) given by
\[
  \mathbf c_{ij}=
  \begin{cases}
    4\mathbf e_i, & i=j,\\
    \mathbf e_i+3\mathbf e_j, & i\ne j,
  \end{cases}
\]
where \(\mathbf e_1,\mathbf e_2,\mathbf e_3\) are the standard coordinate vectors.  Explicitly,
\[
\begin{array}{c|c|c}
\text{eigenline} & \mathbf c_{ij} & E_{ij}\\ \hline
u_1\tensor u_1^{(1)} & (4,0,0) & 4\\
u_1\tensor u_2^{(1)} & (1,3,0) & 28\\
u_1\tensor u_3^{(1)} & (1,0,3) & 244\\
u_2\tensor u_1^{(1)} & (3,1,0) & 12\\
u_2\tensor u_2^{(1)} & (0,4,0) & 36\\
u_2\tensor u_3^{(1)} & (0,1,3) & 252\\
u_3\tensor u_1^{(1)} & (3,0,1) & 84\\
u_3\tensor u_2^{(1)} & (0,3,1) & 108\\
u_3\tensor u_3^{(1)} & (0,0,4) & 324
\end{array}
\]
All these exponents lie in the interval \([0,727]\) and are distinct.  This is exactly the mechanism of Lemma~\ref{lem:injectivity}: the digit coordinates are bounded by \(K=4<8=q-1\), so distinct digit vectors determine distinct residues modulo \(9^3-1\).  Hence \(\rho(S)\) has simple spectrum on
\[
  W\tensor_{\F_9}\F_{9^3},
\]
and the Singer eigenvalues isolate the nine eigenlines \( \F_{9^3}(u_i\tensor u_j^{(1)})\).

We now demonstrate the reconstruction step for a concrete group element.  Let
\[
  A=
  \begin{pmatrix}
    1&\alpha&0\\
    0&1&1\\
    1&0&\alpha
  \end{pmatrix}
  \in \GL_3(9).
\]
Its determinant is
\[
  \det(A)=2\alpha\ne 0.
\]
Since the Frobenius automorphism of \(\F_9/\F_3\) has order \(2\), we have
\[
  \alpha^3=2\alpha
\]
and hence
\[
  A^{(3)}=
  \begin{pmatrix}
    1&2\alpha&0\\
    0&1&1\\
    1&0&2\alpha
  \end{pmatrix}.
\]
With respect to the ordered tensor basis
\[
  w_{11},w_{12},w_{13},w_{21},w_{22},w_{23},w_{31},w_{32},w_{33},
\]
the matrix of \(\rho(A)=A\tensor A^{(3)}\) is the following \(9\times 9\) block matrix:
\[
\rho(A)=
\begin{pmatrix}
1&2\alpha&0&\alpha&1&0&0&0&0\\
0&1&1&0&\alpha&\alpha&0&0&0\\
1&0&2\alpha&\alpha&0&1&0&0&0\\
0&0&0&1&2\alpha&0&1&2\alpha&0\\
0&0&0&0&1&1&0&1&1\\
0&0&0&1&0&2\alpha&1&0&2\alpha\\
1&2\alpha&0&0&0&0&\alpha&1&0\\
0&1&1&0&0&0&0&\alpha&\alpha\\
1&0&2\alpha&0&0&0&\alpha&0&1
\end{pmatrix}.
\]
The block structure is visible: the \((r,s)\)-block is \(a_{rs}A^{(3)}\).  In this example \(a_{11}=1\), so the upper-left \(3\times 3\) block is exactly \(A^{(3)}\).  Applying the Frobenius automorphism once more to this block recovers
\[
  (A^{(3)})^{(3)}=A.
\]
Thus the natural matrix \(A\) is recovered from the tensor-product matrix \(\rho(A)\).

More generally, if \(M=\rho(A)=A\tensor A^{(3)}\) is given in the tensor basis and the \((r,s)\)-block is nonzero, then that block has the form
\[
  M_{rs}=a_{rs}A^{(3)}.
\]
Cubing entries gives
\[
  M_{rs}^{(3)}=a_{rs}^{\,3}A,
\]
which is a scalar multiple of \(A\).  Hence the projective class \([A]\in \PGL_3(9)\) is recovered from any nonzero block of \(M\).  This gives an explicit functor-specific inversion for the tensor product
\[
  L(1,0,0)\tensor L(3,0,0).
\]

This example illustrates the complete mechanism used in Theorem~\ref{thm:rewriting}.  The Singer cycle gives one-dimensional labelled eigenlines because the tensor product is multiplicity-free and \(K<q-1\).  Once the labelled eigenbasis has been identified with the tensor basis, the induced matrix \(A\tensor A^{(3)}\) determines the natural projective matrix \(A\) by the explicit block inversion above.

\subsection{Discussion}

Although Theorems~\ref{thm:distinct-eigenvalues-weak} and~\ref{thm:simple-spectrum} are proved theoretically, the experiments above provide a useful independent check of the eigenvalue-labelling mechanism and of the reconstruction philosophy in a few small but representative cases.
They show that:

\begin{itemize}
  \item the base-$q$ injectivity lemma (Lemma~\ref{lem:injectivity}) behaves as predicted for various small triples $(q,d,C)$ with $C<q-1$;
  \item the exponent formula $E(\nu) = \sum_i c_i(\nu) q^{i-1}$ correctly encodes the eigenvalues of a Singer cycle on symmetric powers of the natural module, and the digits recovered from discrete logarithms match the expected weight patterns;
\item the exponent-model labelling strategy scales efficiently for the parameters $(q,d)=(2^{16},10)$, identifying digit vectors without constructing matrices over $\mathbb{F}_{2^{160}}$, as demonstrated in Section~\ref{sec:experiments-model};
  \item in multiplicity-free modules such as $\Sym^k(V)$ for $k \le 3$ and $(q,d) = (7,3)$, the spectrum of a Singer cycle is indeed simple, and the map from weight patterns to eigenvalues is a bijection;
  \item in the toy example $d=2$ with $W=\Sym^2(V)$ over $\F_7$, the matrix $\Sym^2(A)$ determines $A$ up to the expected projective ambiguity for random choices of $A \in \GL_2(7)$, in line with the reconstruction philosophy discussed in Section~\ref{sec:algorithm}.
\item in the genuine tensor-product example $W = L(1,0,0) \tensor L(3,0,0)$ over $\F_9$, the base-$9$ expansion explicitly separates the weight spaces, and the Kronecker structure allows immediate recovery of the projective natural matrix, proving that the conditional framework successfully executes rewriting on a strict multiplicity-free tensor product.
\end{itemize}

From the algorithmic point of view, these examples illustrate on a small scale how the spectral labelling produced by a Singer cycle can be used to organise basis vectors by weight patterns and thereby reduce rewriting to a reconstruction problem on the natural module.
They do not constitute a proof of a fully uniform reconstruction theorem for arbitrary polynomial highest weights.
Rather, they provide evidence that the framework developed in Section~\ref{sec:algorithm} is effective in low-degree cases and in concrete families such as symmetric powers.

\vspace*{1cm}

\subsection*{Funding}
This research received no external funding.

\subsection*{Conflict of interest}
The author declares that there is no conflict of interest.

\subsection*{Data Availability} 
Data sharing is not applicable to this article as no datasets were generated or analyzed during the current study. The illustrative \textsf{SageMath} subroutines and all explicit mathematical computations supporting the findings are provided directly within the manuscript.

\end{document}